\newcommand{\OR}{\operatorname{\textbf{OR}}}
\newcommand{\CM}{\operatorname{\textbf{CM}}}
\newcommand{\NCM}{\operatorname{\textbf{NCM}}}
\newcommand{\Nm}{\operatorname{\textbf{N}}}
\begin{document}

\title{A possible generalization of Maeda's conjecture}

\author{Panagiotis Tsaknias}
\address{Department of Mathematics, University of Luxembourg,
                          Campus Kirchberg, 6 rue Richard Coudenhove-Kalergi, L-1359 Luxembourg}
\email{panagiotis.tsaknias@uni.lu}
\email{p.tsaknias@gmail.com}

\date{May 2012}

\begin{abstract}
We report on observations we made on computational data that suggest a generalization of Maeda's conjecture regarding the number of Galois orbits of newforms of level $N = 1$, to higher levels. They also suggest a possible formula for this number in many of these cases.
\end{abstract}

\maketitle

\section{Introduction}

For $N\geq 1$ and $k\geq 2$ even let $S_k(N)$ denote the space of modular forms of weight $k$ and level $\Gamma_0(N)$ and let $S_k^{\textrm{new}}(N)$ be its new subspace. In the case of $N=1$ Maeda made the following conjecture:

\begin{conj}
The characteristic polynomial of the Hecke operator $T_p$ when acting on $S_k(1)$ is irreducible for all primes $p$ and for all weights $k$, provided the space is non-trivial.
\end{conj}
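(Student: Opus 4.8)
The plan is to study the characteristic polynomial $f_{k,p}(x) = \det\bigl(x - T_p \mid S_k(1)\bigr)$, which lies in $\mathbb{Z}[x]$ because $T_p$ preserves the integral Hecke lattice inside $S_k(1)$, and to attack not just its irreducibility but the stronger expectation that the Galois group $G = \mathrm{Gal}(f_{k,p})$ of its splitting field is the full symmetric group $S_d$, where $d = \dim S_k(1)$; irreducibility over $\mathbb{Q}$ is exactly the statement that $G$ acts transitively on the $d$ roots, equivalently that the newforms of weight $k$ form a single Galois orbit. The whole engine is Dedekind's theorem: for a prime $\ell$ not dividing $\mathrm{disc}(f_{k,p})$, the factorization type of $f_{k,p} \bmod \ell$ over $\mathbb{F}_\ell$ reads off the cycle type of a Frobenius element in $G$, so I can hope to detect specific permutations inside $G$ by computing a few reductions.

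First I would establish transitivity, hence irreducibility, by exhibiting for each $k$ a prime $\ell_1$ at which $f_{k,p}$ is irreducible over $\mathbb{F}_{\ell_1}$; such an $\ell_1$ produces a $d$-cycle in $G$, and when $d$ is prime this already forces $G$ to be primitive. Next I would find a prime $\ell_2$ at which $f_{k,p} \bmod \ell_2$ splits as one irreducible quadratic times $d-2$ distinct linear factors, which exhibits a transposition in $G$. The classical theorem of Jordan — a primitive permutation group of degree $d$ containing a transposition is all of $S_d$ — then finishes the argument. When $d$ is composite I would first secure primitivity separately, for instance by locating a third prime $\ell_3$ at which Frobenius is a cycle of prime length $q$ with $d/2 < q < d$, so that its support cannot be a union of nontrivial blocks; a transitive group carrying such a cycle is primitive and, by Jordan's $p$-cycle criterion, already contains $A_d$.

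The hard part — and the reason this remains a conjecture rather than a theorem — is uniformity in the weight $k$. For any one fixed $k$ the procedure above is effective: one simply computes $f_{k,p}$ and searches for the primes $\ell_1, \ell_2, \ell_3$, which is essentially what the numerical verifications carry out. But there is at present no theoretical mechanism guaranteeing that such primes exist for \emph{every} $k$, because one has almost no structural control over the Hecke eigenvalue field $\mathbb{Q}\bigl(\{a_n(f)\}\bigr)$ attached to a level-one newform $f$, nor over how $\mathrm{Gal}(\overline{\mathbb{Q}}/\mathbb{Q})$ permutes the conjugate forms as $k$ grows. The natural refinements — analyzing the mod-$\ell$ representations $\overline{\rho}_{f,\ell}$ and the image of the residual Hecke algebra, or interpolating the eigensystems in Hida and Coleman families — constrain the arithmetic of individual forms sharply but run into the same wall: none of them rules out an accidental rational factor of $f_{k,p}$ surfacing for some large $k$. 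I therefore do not expect to close the argument unconditionally; the honest contribution is to verify the conjecture in an explicit range and to isolate, as cleanly as possible, the single uniform input that is missing.
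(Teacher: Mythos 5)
There is no proof in the paper for you to match: the statement is Maeda's conjecture itself, which the paper records precisely as an open conjecture and supports only by computation; the entire article is observational (tables of Galois-orbit counts suggesting generalizations to level $N>1$), and nothing in it, nor anywhere in the literature, establishes the conjecture. Your proposal, by your own closing admission, is not a proof either. What you describe is the standard finite-verification scheme --- compute $f_{k,p}\in\mathbb{Z}[x]$, reduce modulo auxiliary primes, use Dedekind's theorem to exhibit a $d$-cycle (irreducibility mod $\ell_1$), a transposition (one quadratic factor times distinct linear factors mod $\ell_2$), and if necessary a prime-length cycle of length $q$ with $d/2<q<d$ (mod $\ell_3$), then invoke Jordan's criteria to conclude $G=S_d$ --- and this is indeed how Maeda's conjecture has been checked in practice for large ranges of $k$. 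But as a proof it has an irreparable gap, which you correctly isolate: there is no mechanism guaranteeing suitable $\ell_1,\ell_2,\ell_3$ exist for \emph{every} $k$; indeed even the existence of $\ell_1$ alone (equivalently, transitivity of $G$) for all $k$ is the conjecture. Note also that Dedekind's theorem needs $\ell\nmid\operatorname{disc}(f_{k,p})$, so you are implicitly assuming $f_{k,p}$ is separable, which is itself part of what is being conjectured.

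One concrete mathematical error to fix: you claim irreducibility of $f_{k,p}$ over $\mathbb{Q}$ is ``exactly'' the statement that the weight-$k$ newforms form a single Galois orbit. The equivalence fails in one direction, and the paper is explicit about this --- the irreducibility conjecture \emph{implies} the single-orbit statement (the paper's Conjecture \ref{cj:maeda2}), which it calls ``weaker.'' A single Galois orbit says only that $\operatorname{Gal}(\overline{\mathbb{Q}}/\mathbb{Q})$ acts transitively on the set of normalized eigenforms; irreducibility of the characteristic polynomial of the single operator $T_p$ additionally requires the $d$ eigenvalues $a_p(f^\sigma)$ to be pairwise distinct, i.e. that $a_p(f)$ generate the full coefficient field $K_f$. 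Nothing a priori rules out $\mathbb{Q}(a_p(f))$ being a proper subfield of $K_f$ for some $p$ and $k$, in which case $f_{k,p}$ would factor (as a proper power or product of lower-degree polynomials) while $S_k(1)$ still consists of a single orbit. So even granting your $\ell_2,\ell_3$ machinery, the target of your transitivity step is strictly stronger than the orbit statement you identify it with, and the two should be kept apart exactly as the paper keeps its two conjectures apart.
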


This conjecture implies the following, weaker one:

\begin{conj}\label{cj:maeda2}
$S_k(1)$ consists of a single newform Galois orbit for all $k$.
\end{conj}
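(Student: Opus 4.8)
The plan is to derive Conjecture~\ref{cj:maeda2} from the stronger conjecture stated above by reinterpreting the irreducibility of a single Hecke characteristic polynomial as a bound on the number of Galois orbits; throughout I assume $S_k(1) \neq 0$, as otherwise both statements are vacuous. First I would assemble the standard newform theory at level one. Since $N = 1$ the whole space is new and every $T_p$ is self-adjoint for the Petersson inner product, so the commuting family $\{T_n\}$ simultaneously diagonalizes $S_k(1)$; combined with multiplicity one this produces a basis of normalized Hecke eigenforms $f_1, \dots, f_d$, where $d = \dim_{\mathbb{C}} S_k(1)$, each with algebraic Fourier coefficients. The absolute Galois group $G_{\mathbb{Q}}$ acts on this set of eigenforms through its action on coefficients, $\sigma \cdot f = \sum_n \sigma(a_n(f))\, q^n$, and a Galois orbit is by definition an orbit for this action, giving a partition $S_k(1) = \bigsqcup_{\mathcal{O}} \mathcal{O}$. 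The content of Conjecture~\ref{cj:maeda2} is that there is exactly one such orbit.

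Next I would make the bridge to characteristic polynomials explicit. The eigenvalue of $T_p$ on $f_j$ is $a_p(f_j)$, so in the eigenbasis the characteristic polynomial of $T_p$ on $S_k(1)$ is $\prod_{j=1}^{d}(X - a_p(f_j))$. Grouping the eigenforms by orbit rewrites this as $\prod_{\mathcal{O}} P_{\mathcal{O}}(X)$, where $P_{\mathcal{O}}(X) := \prod_{f \in \mathcal{O}}(X - a_p(f))$. The key observation is that each $P_{\mathcal{O}}$ already has rational coefficients: because $\sigma(a_p(f)) = a_p(\sigma \cdot f)$ and $G_{\mathbb{Q}}$ merely permutes the members of $\mathcal{O}$, the multiset of roots $\{a_p(f) : f \in \mathcal{O}\}$ is $G_{\mathbb{Q}}$-stable, so the elementary symmetric functions in these roots are Galois-invariant algebraic integers, hence lie in $\mathbb{Z}$. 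Moreover $\deg P_{\mathcal{O}} = |\mathcal{O}| \geq 1$, so each orbital factor $P_{\mathcal{O}}$ is a nonconstant polynomial over $\mathbb{Q}$.

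From here the deduction is immediate. Fix any prime $p$ and invoke the stronger conjecture: the characteristic polynomial of $T_p$ on $S_k(1)$ is irreducible over $\mathbb{Q}$. But we have just exhibited it as the product $\prod_{\mathcal{O}} P_{\mathcal{O}}(X)$ of nonconstant rational polynomials, one for each Galois orbit. A product of two or more nonconstant polynomials over a field is reducible, so irreducibility forces the product to consist of a single factor, i.e.\ there is exactly one orbit $\mathcal{O}$. This is precisely Conjecture~\ref{cj:maeda2}; note in passing that it already follows from the irreducibility of $T_p$ for even a single prime $p$, so the full strength of ``for all $p$'' is not needed.

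I would flag that the genuine mathematical substance lies entirely in the setup rather than in the final combinatorial step: one needs that $S_k(1)$ admits a $\overline{\mathbb{Q}}$-rational eigenbasis, that eigenforms occur with multiplicity one, and that the $G_{\mathbb{Q}}$-action on eigenforms is compatible with the Galois action on $T_p$-eigenvalues. Granting these standard facts, the implication itself presents no real obstacle. The only point demanding care is the claim that each orbital factor $P_{\mathcal{O}}$ descends to $\mathbb{Q}$, since that is exactly what turns ``product of orbital factors'' into a genuine factorization over the base field rather than merely over $\overline{\mathbb{Q}}$; without it the reducibility conclusion would not be licensed.
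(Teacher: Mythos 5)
Your argument is correct and matches the paper's own (implicit) reasoning: the statement is a conjecture that the paper does not prove, asserting only that it follows from Maeda's irreducibility conjecture, and your orbit-wise factorization of the characteristic polynomial of $T_p$ into nonconstant factors with rational (indeed integral) coefficients is exactly the standard argument behind that assertion. Your side remark that irreducibility for a single prime $p$ already suffices is also accurate, and consistent with the paper calling Conjecture~\ref{cj:maeda2} the weaker statement.
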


We believe we can propose a few generalizations of the latter one, based on computational data we have accumulated and that we will present in the next section:

\begin{itemize}
\item Fix $N$. Then the number of newform Galois orbits in $S_k^{\textrm{new}}(N)$, as a function of $k$, is bounded.
\item Fix $N$. The number of non-CM newform Galois orbits in $S_k^{\textrm{new}}(N)$ as a function of $k$ is eventually constant.
\item Assuming the above guess is true, let $\NCM(N)$ be the constant to which the number of non-CM newform orbits in $S_k^{\textrm{new}}(N)$ eventually stabilizes. Then $\NCM()$ is a multiplicative function, i.e. 
$$\NCM(NM) = \NCM(N)\NCM(M)$$
for all coprime integers $N$ and $M$.
\end{itemize}

In Section \ref{sec:notations} we introduce notation and formulate our precise questions. In Section \ref{sec:NCM} we have brief discussion on the non-CM newform Galois orbits, which is the mysterious part, and we continue in Section \ref{sec:CM} with an overview of the theory of Hecke characters and CM forms, which we use to get a grasp on the number of CM newform Galois orbits. Finally in Section \ref{sec:observations} we give the table with the patterns we have observed that led us to formulate our questions and in Appendix \ref{app:Data} we give tables with all the computational data on which all our present questions are based.

\section{Setup-Notation}\label{sec:notations}

Let $N,k$ be positive integers, $k$ even. We will denote by $S_k(N)$ the space of cusp forms of level $\Gamma_0(N)$ and weight $k$ and by $S_k^{\textrm{new}}(N)$ its new-subspace. Every element $f$ of $S_k(N)$ has a Fourier expansion $\sum_{n\geq1}c_n(f)q^n$ which is called the $q$-expansion of $f$. The space $S_k(N)$ admits an action by the $\Z$-algebra of the Hecke operators $T_n$ which preserves the new subspace. We will denote this algebra by $\T_k(N)$ and by $\T_k^{\text{new}}(N)$ its quotient through which it acts on the new subspace. Finally $S_k^{\text{new}}(N)$ admits a basis of simultaneous eigenvectors for all the operators of $\T_k(N)$, which we can assume to be normalized, i.e that $c_1(f)=1$ for each of these eigenvectors $f$.

It turns out for such a normalized eigenvector $f$ that the extension $K_f:= \Q(\{c_n(f)\}_n)/\Q$ is finite, and that the coefficients $c_n(f)$ are algebraic integers. In view of this we have an action of $\gq:=\GQ$ on these $f$ that commutes with the one of $\T_k(N)$ and therefore it preserves a given basis of normalized eigenforms. It also actually preserves a basis of normalized eigenforms of $S_k^{\text{new}}(N)$.
We will call the orbits of this last action the newform Galois orbits of level $N$ and weight $k$, and denote their number by $\OR(N,k)$. 

The purpose of this article is to report on computations regarding $\OR(N,k)$ for $N > 1$.  We provide tables with all the data that we used to formulate the following questions at the end of this article. We need to introduce some other notions and notation first.

Let $\e:(\Z/D\Z)^\times\lra\C^\times$ be a Dirichlet character. If $f$ is a modular form of level $N$ and weight $k$ then
$$f\otimes\e:=\sum_{n\geq1}\e(n)c_n(f)q^n$$
is also a modular form of level (dividing) $ND^2$ and weight $k$ (but not necessarily of trivial Nebentypus, even if $f$ is!).
If $f=f\otimes\e$ for some nontrivial quadratic character $\e$ then $f$ is called a complex multiplication form (CM form for short). Notice that if $f$ is a CM form then $f^\sigma$ is also a CM form for all $\sigma \in \gq$. Therefore we can talk about CM and non-CM newform Galois orbits. For a fixed level $N$ and weight $k$ we will denote their number by $\CM(N,k)$ and $\NCM(N,k)$ respectively. Obviously the following holds:
$$\OR(N,k) = \CM(N,k) + \NCM(N,k).$$
Looking through the tables we have compiled in Section \ref{sec:observations} we were led to formulate the following questions:
\begin{qst}\label{qst:OR}
Fix an integer $N\geq1$. Then $\OR(N,k)$, as a function of $k$, is bounded.
\end{qst}
\begin{qst}\label{qst:NCM}
Fix an integer $N\geq1$. Then $\NCM(N,k)$, as a function of $k$, is eventually constant. If we let $\NCM(N)$ be this constant then we have that:
$$\NCM(NM)=\NCM(N)\NCM(M)\qquad\forall(N,M)=1,$$
i.e. $\NCM()$ is a multiplicative function.
\end{qst}
Specializing this last question to squarefree levels we get:
\begin{qst}\label{qst:sqfree}
Fix an integer $N=p_1\cdots p_s\geq1$, where the $p_i$'s are pairwise distinct prime numbers and $s\geq0$. Then the number of newform Galois orbits in $S_k^{\textrm{new}}(N)$ is eventually equal to $2^s$.
\end{qst}
The tables of Section \ref{sec:observations} suggest that $\NCM(p)=2$ for every prime $p$ so a positive answer to Question \ref{qst:NCM} along with the fact that there aren't any CM newforms for squarefree levels (see Section \ref{sec:CM} for details), would imply a positive answer to Question \ref{qst:sqfree}. Of course one can get many more conjectural formulas like the one of Question \ref{qst:sqfree} for levels containing higher prime powers by combining the suggested multiplicativity of $\NCM$ along with observations from the tables of Section \ref{sec:observations}. Observing for  example that $\NCM(4)=1$ and, as before, that $\NCM(p)=2$ for every prime, then assuming Question \ref{qst:NCM} is true we get:
$$\NCM(2^2p)=2\qquad\textrm{for all primes }p>2.$$

\section{Non-CM forms}\label{sec:NCM}

In the early versions of this manuscript we had only included Question  \ref{qst:OR} along with the observation that, when stable, the number of newform Galois orbits seemed to behave multiplicatively in many cases (like the squarefree level case for example, which is the subject of Question \ref{qst:sqfree}).
After we emailed these to William Stein he informed us that he had conducted similar computations and suggested that one should look separately the cases of CM and non-CM forms.
Moreover he suggested that the decomposition of the non-CM part should be forced by the Atkin-Lehner operators.

In the squarefree level case this agrees with Question \ref{qst:sqfree}: As we have already mentioned, there are no CM forms for squarefree levels. The number of Atkin-Lehner operators is $s$, where $N=p_1\cdots p_s$ and therefore $2^s$
possible combinations of eigenvalues for all the operators considered together. It turns out that each combination does occur for exactly one non-CM Galois orbit.

Conceptually, the Atkin-Lehner operators seem to be a good candidate to explain that $\NCM(N,k)$ is (eventually) independent of $k$ since they only depend on the level and they are the ones that cut out the newspace out of $S_k(N)$.

For non-squarefree levels though things get more complicated. The Atkin-Lehner operators are not enough to distinguish between all non-CM Galois orbits, i.e. the same combination of Atkin-Lehner eigenvalues occurs for more than one non-CM Galois orbit. For example, in the case $N=16=2^4$ the computational data suggest that there are eventually $6$ newform Galois orbits for all sufficiently big weights $k$, none of which is CM. there is only one Atkin-Lehner operator, $w_{16}$, which cuts the newsubspace into two pieces. These two pieces break further into $3$ irreducible ones each,  which correspond to the newform Galois orbits. This suggests that a further refinement of the Atkin-Lehner decomposition of $S_k^{\textrm{new}}(N)$ is needed in order to distinguish all the different non-CM orbits, but we are unable however to determine how it should be obtained.

On the other hand however, the distinction between CM and non-CM Galois orbits seems essential: The CM Galois orbits seem to be the reason $\OR(N,k)$ is eventually periodic (whenever this happens) instead of  eventually constant as a function of $k$ and seem to be also the ``error term" that is obstructing the number of Galois orbits to behave multiplicatively.

\section{CM forms}\label{sec:CM}

In this section we describe the connection between CM forms and Hecke characters of imaginary quadratic fields (hence the name CM forms). Our goal is to eventually explain why $\CM(N,k)$ is bounded and periodic. Most of the material in this section is well known facts that we apply in the case of interest to us, i.e. CM forms of trivial Nebentypus (and therefore even weight). The interested reader should consult \cite[\S 3]{Ribet1977}, \cite[\S 4.8]{Miyake2006} and \cite[Chapter VII, \S 3]{Neukirch1999}.

Let $K$ be an imaginary quadratic field, $\Oc=\Oc_K$ its ring of integers and $\mg$ an ideal of $K$. Let us also denote by $J^\mg$ the group of fractional ideals of $\Oc_K$ that are coprime to $\mg$. A Hecke character $\psi$ of $K$, of modulus $\mg$, is then a group homomorphism $\psi:J^\mg\lra \C^*$ such that there exists a character $\psi_f:(\Oc_K/\mg)^*\lra\C^*$ and a group homomorphism $\psi_\infty:K_\R^*\lra\C^*$, where $K_\R:=K\ox_\Q\R$, such that:
\begin{equation}\label{eq:comp}
\psi((\al))=\psi_f(\al)\psi_\infty(\al)\qquad\textrm{for all }\al\in K.
\end{equation}
One calls $\psi_f$ and $\psi_\infty$ the finite and the infinite type of $\psi$ respectively. The conductor of $\psi$ is the conductor of $\psi_f$ and we will call $\psi$ primitive if the conductor is equal to its modulus.
The relation \eqref{eq:comp} implies that $\psi_f(\e)\psi_\infty(\e)=1$ for all units $\e\in\Oc^*$. The converse is also true: Given a pair $(\psi_f,\psi_\infty)$ satisfying this relation for all units $\e\in\Oc^*$ then there is a Hecke character they are attached to it (\cite[Chapter VII, \S 3, Exercise 5]{Neukirch1999}). This Hecke character however might not be unique:
\begin{lem}\label{lem:Hecke}
The number of different Hecke characters associated with a given pair $(\psi_f,\psi_\infty)$ is equal to the class number $h_K$ of $K$.
\end{lem}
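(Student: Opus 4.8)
The plan is to realise a Hecke character attached to $(\psi_f,\psi_\infty)$ as an extension of a prescribed homomorphism on principal ideals, and to count such extensions using the ideal class group together with the divisibility of $\C^*$.

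First I would observe that the pair already determines $\psi$ on the subgroup $P^\mg\subset J^\mg$ of principal ideals coprime to $\mg$: relation \eqref{eq:comp} forces $\psi((\al))=\psi_f(\al)\psi_\infty(\al)$ for every such ideal. The point to check is that this prescription is independent of the chosen generator. Two generators of $(\al)$ differ by a unit $\e\in\Oc^*$, and the relation $\psi_f(\e)\psi_\infty(\e)=1$ recorded before the statement gives $\psi_f(\e\al)\psi_\infty(\e\al)=\psi_f(\al)\psi_\infty(\al)$; multiplicativity in $\al$ is then clear. Hence $(\psi_f,\psi_\infty)$ defines a genuine homomorphism $\psi_0\colon P^\mg\lra\C^*$, and a Hecke character of modulus $\mg$ attached to the pair is exactly an extension of $\psi_0$ from $P^\mg$ to all of $J^\mg$.

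Next I would identify the quotient $J^\mg/P^\mg$. The natural map from $J^\mg$ to the full ideal class group $\mathrm{Cl}(K)$ is surjective, since every ideal class contains a representative coprime to $\mg$, and its kernel is precisely the group of principal ideals coprime to $\mg$, i.e. $P^\mg$. This gives a short exact sequence
$$1\lra P^\mg\lra J^\mg\lra \mathrm{Cl}(K)\lra 1,$$
with $\mathrm{Cl}(K)$ finite of order $h_K$. Counting extensions of $\psi_0$ is then a matter of homological algebra: because $\C^*$ is divisible, hence injective as a $\Z$-module, the restriction map $\mathrm{Hom}(J^\mg,\C^*)\lra\mathrm{Hom}(P^\mg,\C^*)$ is surjective, so at least one extension exists. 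Fixing one, every other extension differs from it by a homomorphism $J^\mg\lra\C^*$ trivial on $P^\mg$, that is, by a character of $\mathrm{Cl}(K)$. The set of Hecke characters attached to $(\psi_f,\psi_\infty)$ is therefore a torsor under $\mathrm{Hom}(\mathrm{Cl}(K),\C^*)$, whose cardinality equals $\#\mathrm{Cl}(K)=h_K$ since $\mathrm{Cl}(K)$ is a finite abelian group.

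The steps involving the surjection $J^\mg\twoheadrightarrow\mathrm{Cl}(K)$ and the well-definedness on units are routine but should be stated carefully, as they are what make the exact sequence and the torsor argument legitimate. I expect the genuinely essential ingredient to be the existence of at least one extension: this is exactly where the divisibility (injectivity) of $\C^*$ is used, and it is what upgrades the torsor count from an upper bound to the precise value $h_K$.
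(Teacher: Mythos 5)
Your proof is correct, and it reaches the count $h_K$ by a genuinely different mechanism than the paper's. Both arguments pivot on the same identification $J^\mg/P^\mg\isom J/P$, but from there they diverge. The paper fixes a Hecke character $\psi$ attached to $(\psi_f,\psi_\infty)$ --- existence is not re-proved but delegated to the exercise in Neukirch cited just before the lemma --- decomposes the class group by the structure theorem as $\prod_{i} C_{t_i}$ with $\prod_i t_i=h_K$, and observes that on a generator $\mathfrak{a}_i$ the value $\psi(\mathfrak{a}_i)$ is constrained to be a $t_i$-th root of the known quantity $\psi_f(\pi_i)\psi_\infty(\pi_i)$, where $\mathfrak{a}_i^{t_i}=(\pi_i)$; this gives $t_i$ choices per generator. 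You instead observe that the pair pins down a homomorphism $\psi_0$ on $P^\mg$ (your well-definedness check on units is exactly the right use of the compatibility relation), extend it to $J^\mg$ using injectivity of the divisible group $\C^*$, and identify the set of all extensions as a torsor under the character group of the class group. Your route buys two things the paper leaves implicit: existence of at least one character attached to the pair is proved rather than cited, and the passage from ``$t_i$ choices per generator'' to ``exactly $h_K$ characters'' is made airtight --- in the paper one still has to check that every combination of root choices assembles into a multiplicative homomorphism on all of $J^\mg$, which is precisely your torsor statement. What the paper's explicit version buys in exchange is constructive access to the values $\psi(\mathfrak{a}_i)$, which the author exploits later in the $N=p^2$ discussion to determine the coefficient fields $L_\psi=\Q(\psi(\mathfrak{a}))$ and hence how the CM newforms group into Galois orbits. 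One cosmetic remark: your final count $\#\mathrm{Hom}(J^\mg/P^\mg,\C^*)=h_K$ is itself usually proved via the same structure theorem the paper invokes, so the two arguments are cousins; yours simply organizes the bookkeeping homologically instead of by explicit enumeration.
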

\begin{proof}
Let $\psi:J^\mg\lra\C^*$ be a Hecke character associated with the given pair. Let $P^\mg$ denote the subgroup of $J^\mg$ consisting of principal ideals. Then $J^\mg/P^\mg\isom J/P$, the class group of $K$. This is an abelian group so by the structure theorem of abelian groups it is isomorphic to a product $\prod_{i=1}^s C_{t_i}$ of cyclic groups with $\prod_it_i=h_K$. For every $i$ pick an ideal $\mathfrak{a}_i$ generating the group $C_{t_i}$. Then $\mathfrak{a}_i^{t_i}$ is a principal ideal $(\pi_i)$. We therefore have:
$$\psi^{t_i}(\mathfrak{a}_i)=\psi(\mathfrak{a}_i^{t_i})=\psi((\pi_i))=\psi_f(\pi_i)\psi_\infty(\pi_i).$$
This means that for every generator $\mathfrak{a}_i$ we have $t_i$ choices and therefore we have $\prod_it_i=h_K$ different Hecke characters associated with the same pair $(\psi_f,\psi_\infty)$.
\end{proof}
This reduces the problem of counting Hecke characters to counting compatible pairs $(\psi_f,\psi_\infty)$.

From this point on we specialize to the case where $K$ is an imaginary quadratic field. We will denote by $D$ its discriminant and by $(D|.)$ the Kronecker symbol associated to it. The only possible group homomorphisms $\psi_\infty$ are then of the form $\sigma^u$, where $\sigma$ is one of the two conjugate complex embeddings of $K$ and $u$ is a non negative integer. The following theorem associates a (CM) modular form a Hecke character $\psi$ of $K$ (\cite[Theorem 4.8.2]{Miyake2006}):
\begin{thm}\label{thm:Miyake}
Given $\psi$ of infinity type $(u,0)$ and  finite type $\psi_f$ with modulus $\mg$. Assume $u>0$ and let $\Nm\mg$ be the norm of $\mg$. We then have that
$$f=\sum_n(\sum_{\mathfrak{a}=n}\psi(\mathfrak{a}))q^n$$
is a cuspidal eigenform in $S_{u+1}(N,\chi)$, where $N=|D|\Nm\mg$ and $\chi(m)=(D|m)\psi_f(m)$ for all integers $m$.
\end{thm}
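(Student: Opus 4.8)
The plan is to realize $f$ as a sum, over the ideal classes of $K$, of theta series attached to the norm forms of the classes, and to read off its modularity from the classical transformation law for theta series with harmonic coefficients. First I would verify that $f$ defines a holomorphic function on the upper half-plane $\mathbb{H}$: writing $a_n=\sum_{\Nm\mathfrak{a}=n}\psi(\mathfrak{a})$ with the sum over integral ideals coprime to $\mg$, the identity $|\psi(\mathfrak{a})|=\Nm\mathfrak{a}^{u/2}$ together with the standard bound on the number of ideals of given norm gives $|a_n|=O(n^{u/2+\varepsilon})$, so the $q$-series converges on $\mathbb{H}$, and $a_0=0$ because there is no ideal of norm $0$.

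Next I would split the Dirichlet series by ideal class. Fixing integral ideals $\mathfrak{c}_1,\dots,\mathfrak{c}_{h_K}$ representing the classes of $K$, every $\mathfrak{a}$ in the class of $\mathfrak{c}_j^{-1}$ satisfies $\mathfrak{a}\mathfrak{c}_j=(\al)$ with $\al\in\mathfrak{c}_j$, whence $\Nm\mathfrak{a}=\Nm(\al)/\Nm\mathfrak{c}_j$ and $\psi(\mathfrak{a})=\psi_f(\al)\,\sigma(\al)^u/\psi(\mathfrak{c}_j)$. This rewrites the contribution of each class as a lattice sum over $\al\in\mathfrak{c}_j$ (weighted by $1/|\Oc^*|$ to account for units) of $\sigma(\al)^u$ times the finite-type character, carried by $q^{\Nm(\al)/\Nm\mathfrak{c}_j}$.

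The key structural point is that, under the identification of $K_\R$ with $\C$, the map $\al\mapsto\sigma(\al)^u$ is a harmonic homogeneous polynomial of degree $u$ in the two real coordinates of the lattice $\mathfrak{c}_j$, being the restriction of the holomorphic function $z^u$. The theory of theta series with spherical harmonic coefficients then makes each class-contribution a modular form of weight $\tfrac{2}{2}+u=u+1$, and the discriminant $|D|$ of the norm form together with the modulus $\mg$ of the finite type forces the level to divide $N=|D|\Nm\mg$ and the nebentypus to be $\chi(m)=(D|m)\psi_f(m)$. I would extract these last two assertions by inserting the discriminant of the norm form and the conductor of $\psi_f$ into the theta transformation formula under $\Gamma_0(N)$. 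Pinning down the \emph{exact} level and nebentypus, and checking that the transformation law survives the summation over all classes rather than holding only class by class, is the step I expect to be the main obstacle, since it is where the arithmetic of $D$, $\mg$, and the Gauss-sum factors in the theta transformation must all be reconciled.

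Finally, cuspidality and the eigenform property. Because $u>0$ the harmonic polynomial has positive degree, which forces each theta series to vanish at every cusp, so $f$ is a cusp form. To see that $f$ is a simultaneous Hecke eigenform I would pass to its $L$-function: multiplicativity of $\psi$ on ideals together with unique factorization in $\Oc$ yields $\sum_n a_n n^{-s}=L(s,\psi)=\prod_{\mathfrak{p}}(1-\psi(\mathfrak{p})\Nm\mathfrak{p}^{-s})^{-1}$, and regrouping the Euler factors by rational primes $p$ exhibits a degree-two Euler product whose local factors are exactly those of a Hecke eigenform with $T_p$-eigenvalue $a_p$. Alternatively one can bypass the theta bookkeeping and invoke Weil's converse theorem: the twisted Hecke $L$-functions $L(s,\psi\cdot\eta)$ for Dirichlet characters $\eta$ possess the analytic continuation and functional equations needed to conclude that $f$ is a cusp form of the stated weight, level, and character, and a Hecke eigenform. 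I would regard the theta-series route as the more self-contained of the two.
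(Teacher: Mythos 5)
The paper does not actually prove this statement: Theorem \ref{thm:Miyake} is imported wholesale from Miyake (Theorem 4.8.2), with newness of $f$ handled separately by the citation to Ribet's Remark 3.5. So the honest comparison is with Miyake's proof, and there your ``alternative'' is the actual one: Miyake proceeds via Weil's converse theorem, using the analytic continuation and functional equations of the twisted Hecke $L$-functions $L(s,\psi\eta)$, and the eigenform property is read off from the degree-two Euler product over rational primes obtained by regrouping $\prod_{\mathfrak{p}}(1-\psi(\mathfrak{p})\Nm\mathfrak{p}^{-s})^{-1}$ --- exactly your computation (split $p$ gives a quadratic factor with $\psi(\mathfrak{p})\psi(\bar{\mathfrak{p}})=\psi_f(p)p^{u}$, inert $p$ gives $a_p=0$ and $\chi(p)=-\psi_f(p)$ since $(D|p)=-1$, ramified $p\mid N$ gives a linear factor). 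Your preferred theta-series route is the older Hecke--Schoeneberg approach and is correct in outline: the class-by-class unfolding with the $1/\#\Oc^{*}$ weight is legitimate precisely because the compatibility $\psi_f(\e)\psi_\infty(\e)=1$ makes the summand unit-invariant, $\sigma(\al)^{u}$ is harmonic as the restriction of $z^{u}$, and positive-degree spherical harmonics do force vanishing at every cusp, so cuspidality for $u>0$ comes out as you say. What the converse-theorem route buys is that the level and nebentypus fall out of the Gauss-sum factors in one functional-equation computation; what your route buys is self-containedness, at the cost of the bookkeeping you correctly flagged as the main obstacle.

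Two refinements to that flagged step. First, you only need the level to \emph{divide} $|D|\Nm\mg$, since the theorem asserts membership in $S_{u+1}(N,\chi)$ rather than exactness of the level; exactness is equivalent to primitivity of $\psi$, which is the separate Ribet remark quoted after the theorem, so you are right not to claim it. Second, your worry that the transformation law might hold only after summing over classes dissolves: in the standard treatment each class contribution is separately modular on $\Gamma_0(|D|\Nm\mg)$ with nebentypus $\chi$ --- one decomposes each lattice sum into residue classes modulo $\mg$ and applies the theta transformation formula with characteristics, the Gauss sums reassembling into $(D|\cdot)\psi_f$. Finally, to pass from the Euler product to ``eigenform'' you should make explicit the standard equivalence: a form in $S_k(N,\chi)$ with $a_1=1$ whose coefficients satisfy $a_{mn}=a_m a_n$ for $(m,n)=1$ and the $p$-power recursion $a_{p^{r+1}}=a_p a_{p^r}-\chi(p)p^{k-1}a_{p^{r-1}}$ is automatically a simultaneous eigenform, because those identities are exactly the statement that the $n$-th coefficient of $T_p f$ equals $a_p a_n$. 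With those points supplied, your proposal is a complete and correct proof.
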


The eigenform associated with the Hecke character is new if and only if $\psi$ is primitive (see Remark 3.5 in \cite{Ribet1977}).

Let us introduce some terminology: We will say that a Hecke character $\psi$ is $(N,k)$-suitable if it satisfies the following:
\begin{itemize}
\item $\psi$ is primitive.
\item $|D|\Nm\mg=N$, where $D$ is the discriminant of $K$ and $\mg$ is the modulus of $\psi$.
\item The infinity type of $\psi$ is $\sigma^{k-1}$.
\item $\psi_f(m)=(D|m)$ for all integers $m$.
\end{itemize}
We will also say that a compatible pair $(\psi_f,\psi_\infty)$ is $(N,k)$-\emph{suitable}, if the Hecke characters attached to it are $(N,k)$-suitable.
An obvious consequence of Theorem \ref{thm:Miyake} is that the number of CM newforms in $S_k(\Ga_0(N))$ is equal to the number of $(N,k)$-suitable Hecke characters. Since the number of 
$(N,k)$-suitable Hecke characters is determined in view of Lemma \ref{lem:Hecke} by the number of $(N,k)$-suitable compatible pairs, we will focus on counting these instead. It is obvious that given a weight $k$ and a level, there is only one infinity type that could give an $(N,k)$-suitable pair, namely $\sigma^{k-1}$. With this in mind we are left with counting the number of finite types $\psi_f$ that are compatible with $\sigma^{k-1}$ and give an $(N,k)$-suitable pair.

First of all, notice that only a finite number of imaginary quadratic fields can give an $(N,k)$-suitable compatible pair: The discriminant $D$ of $K$ must divide $N$. Let $\mg$ be an ideal of $K$ such that $|D|\Nm\mg=N$. This allows only a finite number of ideals as the modulus of an $(N,k)$-suitable pair. 

\remark: Notice so far that the bound on the number of possible fields and the bound on the number of possible ideals for every possible field depend only $N$. This already gives a bound on the number of CM newforms of level $N$ and weight $k$ that is independent of $k$.

Let now $\psi_f:(\Oc_K/\mg)^*\lra\C^*$ be a finite type of modulus $\mg$, where $\mg$ is an ideal of $K$ such that $|D|\Nm\mg=N$. Consider the composition $\Z\lra(\Oc_K/\mg)\lra\C$ where the second map is $\psi_f$. The $(N,k)$-suitability of $(\psi_f,\psi_\infty)$ then implies that this composition is the Kronecker symbol $(D|.)$ which has conductor $|D|$ and we thus get an injection:
$$\Z/|D|\Z\lra(\Oc_K/\mg).$$
This implies in particular that $|D|$ divides $\Nm\mg$ and therefore that $D^2\mid N$, restricting further the possible $(N,k)$-suitable compatible pairs. 

The group $(\Oc/\mg)^*$ is an abelian group whose structure can be determined for example by the theorems in \cite{Nakagoshi1979}. These imply  in particular that $(\Z/|D|\Z)^*$ can be seen as a subgroup of this group and, since it is abelian, as a quotient of it as well. Since the finite type is completely determined on integers by the Kronecker symbol, to completely describe it one only has to describe its values on the generators of $(\Oc/\mg)^*/(\Z/|D|\Z)^*$ which is an abelian group too. It turns out that one has $\frac{\#(\Oc/\mg)^*}{\varphi(|D|)}$ different finite types whose values on integers match those of the Kronecker symbol. Out of all these choices, one must consider only those for which $\psi_f$ is of conductor $\mg$, in order for the corresponding cuspforms to be new (see Remark 3.5 in \cite{Ribet1977}).

The last condition one has to check is the compatibility with the infinity type on the integral units of $K$. Recall that this implies:
$$\psi_f(\e)=\psi_\infty^{-1}(\e)=\sigma^{1-k}(\e)\qquad\textrm{for all }\e\in\Oc^*_K$$
All imaginary quadratic fields contain at least the following integral units: $\{1,-1\}$. In both cases the compatibility condition is automatically satisfied no matter what the weight is. We stress one more time that up to this point, our treatment of possible finite types is independent of the weight $k$. The only imaginary quadratic fields $K$ that contain more integral units are the following:
\begin{itemize}
\item $K=\Q(\sqrt{-1})$. In this case $O_K$ contains 4 integral units: $\{1,-1, i, -i\}=\langle i\rangle$. We have that $\psi^2_f(i)=\psi_f(i^2)=(D|-1)=-1$, so $\psi_f(i)=\pm i$. On the other hand $\psi_f(i)=\sigma(i)^{1-k}=i^{1-k}$. If $\psi_f(i)=i$ we then get that $k\equiv0\mod4$. If $\psi_f(i)=-i$ we get that $k\equiv2\mod4$.
\item $K=\Q(\sqrt{-3})$. In this case $O_K$ contains 6 integral units: $\{1,-1,w,w^2,-w,-w^2\}=\langle-w\rangle$, where $w$ is a primitive cubic root of unity. We have that
$\psi^3_f(-w)=\psi_f((-w)^3)=(D|-1)=-1$, so $\psi_f(-w)=w^s$, where $s=0,1,2$. On the other hand $\psi_f(-w)=\sigma(-w)^{1-k}=(-w)^{1-k}$.\end{itemize}

We thus come to the following conclusion:
\begin{prop}
Fix a positive integer $N$ and let $k$ be a varying even integer. Then:
\begin{itemize}
\item The number of CM newforms in $S_k^{\textrm{new}}(N)$ with complex multiplication field $\Q(\sqrt{-d})$, where $d\neq 1,3$, is independent of the weight $k$, i.e. it is constant as a function of $k$.
\item The number of CM newforms in $S_k^{\textrm{new}}(N)$ with complex multiplication field $\Q(\sqrt{-1})$, depends only on $k\mod4$, i.e. has at most  two possible values and it is periodic as a function of $k$.
\item The number of CM newforms in $S_k^{\textrm{new}}(N)$ with complex multiplication field $\Q(\sqrt{-3})$, depends only on $k\mod6$, i.e. has at most  three possible values and it is periodic as a function of $k$.
\end{itemize}
\end{prop}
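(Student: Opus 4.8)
The plan is to express, for a fixed imaginary quadratic field $K=\Q(\sqrt{-d})$, the number of CM newforms in $S_k^{\textrm{new}}(N)$ with CM field $K$ as a sum over a finite, weight-independent collection of data, and then to isolate the sole source of $k$-dependence, namely the unit-compatibility condition. By Theorem \ref{thm:Miyake} together with Lemma \ref{lem:Hecke}, this number equals $h_K$ times the number of $(N,k)$-suitable compatible pairs $(\psi_f,\psi_\infty)$ attached to $K$. As established above, the admissible moduli $\mathfrak{m}$ (those with $|D|\Nm\mathfrak{m}=N$, where $D$ is the discriminant of $K$) form a finite set independent of $k$; the infinity type is forced to be $\sigma^{k-1}$; and the restriction of $\psi_f$ to the integers is forced to equal the Kronecker symbol $(D|\cdot)$. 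For each admissible $\mathfrak{m}$ the number of finite types of modulus $\mathfrak{m}$ agreeing with $(D|\cdot)$ on integers is $\#(\Oc/\mathfrak{m})^*/\varphi(|D|)$, and the primitivity requirement (conductor exactly $\mathfrak{m}$) selects a subset of these; crucially, none of these quantities involves $k$.

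The only place $k$ enters is the compatibility on units, $\psi_f(\epsilon)=\sigma^{1-k}(\epsilon)$ for all $\epsilon\in\Oc_K^*$. I would first record that, since $\sigma$ is an embedding, $\sigma(\epsilon)$ is a root of unity of the same order as $\epsilon$, so for a generator $\epsilon$ of the cyclic group $\Oc_K^*$ the value $\sigma(\epsilon)^{1-k}$ depends on $k$ only through its residue modulo $n:=\#\Oc_K^*$. Whenever $k\equiv k'\pmod{n}$ the unit constraint is literally the same constraint, so the sets of $(N,k)$- and $(N,k')$-suitable finite types coincide; hence the entire count is a function of $k\bmod n$ alone and is in particular periodic.

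It then remains to read off $n$ and the resulting behaviour in each case. When $d\neq1,3$ we have $\Oc_K^*=\{1,-1\}$ and $n=2$; here the constraint at $-1$ reads $\psi_f(-1)=(-1)^{1-k}=-1$ (as $k$ is even), which already agrees with the forced value $(D|-1)=-1$ (as $D<0$), so the unit condition is automatic and imposes nothing: the count is genuinely independent of $k$, not merely $2$-periodic. For $K=\Q(\sqrt{-1})$ we have $\Oc_K^*=\langle i\rangle$ with $n=4$, and the displayed computation $\psi_f(i)=i^{1-k}$ shows the surviving count depends only on $k\bmod4$. For $K=\Q(\sqrt{-3})$ we have $\Oc_K^*=\langle-w\rangle$ with $n=6$, and $\psi_f(-w)=(-w)^{1-k}$ gives dependence only on $k\bmod6$. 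These three cases are exactly the three bullets of the proposition.

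The step I expect to be the main (though still mild) obstacle is verifying that the weight-independent identification of suitable finite types for congruent weights respects the primitivity condition, i.e.\ that imposing the unit value $\psi_f(\epsilon)=\sigma^{1-k}(\epsilon)$ does not interfere with whether a given $\psi_f$ has conductor exactly $\mathfrak{m}$. Since the conductor is an intrinsic property of $\psi_f$ as a character of $(\Oc/\mathfrak{m})^*$ and the unit constraint for $k\equiv k'\pmod{n}$ is identical, this is automatic; but it is the point one must articulate in order to upgrade ``the constraint is the same'' to ``the counts are equal.''
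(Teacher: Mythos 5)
Your proposal is correct and follows essentially the same route as the paper: like the paper, you reduce the count of CM newforms to counting $(N,k)$-suitable compatible pairs via Theorem \ref{thm:Miyake} and Lemma \ref{lem:Hecke}, note that the admissible moduli, the number of finite types matching the Kronecker symbol, and the primitivity condition are all independent of $k$, and isolate the unit-compatibility condition $\psi_f(\e)=\sigma^{1-k}(\e)$, whose dependence on $k$ is only through $k \bmod \#\Oc_K^*$, yielding the three cases $n=2,4,6$. Your closing observation that the conductor is intrinsic to $\psi_f$, so identical unit constraints for $k\equiv k'\pmod{n}$ give identical sets of primitive finite types, is a mild but welcome sharpening of a point the paper leaves implicit.
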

Two immediate corollaries are the following:
\begin{cor}
Fix a positive integer $N$. The number of CM newforms in $S_k^{\textrm{new}}(N)$ is a bounded function of $k$.
\end{cor}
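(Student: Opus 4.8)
The plan is to deduce this directly from the preceding Proposition by splitting the count according to the complex multiplication field. First I would recall the observation recorded in the remark before the Proposition: only finitely many imaginary quadratic fields $K$ can contribute CM newforms of level $N$, since the discriminant $D$ of any such $K$ must satisfy $D^2 \mid N$. Thus there is a finite list $K_1,\dots,K_r$ of candidate fields, and — this is the point I would emphasize — this list depends only on $N$ and not on $k$.

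Next I would write the total count as a finite sum over these fields,
$$\CM(N,k) = \sum_{i=1}^r c_i(k),$$
where $c_i(k)$ denotes the number of CM newforms in $S_k^{\textrm{new}}(N)$ whose CM field is $K_i$. The number of summands $r$ is independent of $k$, which is exactly what makes the reduction useful.

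The final step is to bound each summand using the trichotomy of the Proposition. Each $c_i(k)$ is either constant in $k$ (when $K_i$ is neither $\Q(\sqrt{-1})$ nor $\Q(\sqrt{-3})$), or takes at most two distinct values (when $K_i=\Q(\sqrt{-1})$, depending only on $k\bmod 4$), or at most three distinct values (when $K_i=\Q(\sqrt{-3})$, depending only on $k\bmod 6$). In every case $c_i$ is a bounded function of $k$, a bound being simply the maximum of its finitely many values. Since a finite sum of bounded functions is bounded, $\CM(N,k)$ is bounded as a function of $k$, which is the assertion.

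There is no genuine obstacle here once the Proposition is granted; all the real work sits in the per-field analysis that precedes it. The only step requiring care is the finiteness and $k$-independence of the set $\{K_1,\dots,K_r\}$, supplied by the remark, because without a uniform bound on the number of contributing fields the sum of individually bounded counts could in principle fail to be bounded.
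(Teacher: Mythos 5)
Your proof is correct and matches the paper's intended argument exactly: the corollary is deduced from the Proposition by summing the per-field counts over the finitely many candidate CM fields (finiteness coming from the condition $D^2\mid N$ noted in the remark), each summand being constant or periodic and hence bounded in $k$. Your explicit attention to the $k$-independence of the list of fields is the one point the paper leaves implicit, and it is handled correctly.
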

\begin{cor}
Fix a positive integer $N$. Then $\CM(N,k)$ is a bounded function of $k$.
\end{cor}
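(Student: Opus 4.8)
The plan is to deduce the statement immediately from the preceding corollary by comparing the number of CM newform Galois orbits with the total number of CM newforms. The point is that $\CM(N,k)$ counts orbits of the $\gq$-action, whereas the preceding corollary bounds the size of the set on which this action takes place.

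First I would recall that the $\gq$-action on normalized newforms carries CM forms to CM forms: if $f=f\otimes\e$ for a nontrivial quadratic character $\e$, then for $\sigma\in\gq$ we have $f^\sigma=(f\otimes\e)^\sigma=f^\sigma\otimes\e^\sigma=f^\sigma\otimes\e$, using that $\e^\sigma=\e$ since $\e$ is quadratic. Hence the action restricts to the set of CM newforms in $S_k^{\textrm{new}}(N)$ and the notion of a CM newform Galois orbit is well-defined. This action partitions the CM newforms into disjoint, nonempty orbits, so the number of orbits cannot exceed the number of forms being permuted:
$$\CM(N,k)\leq\#\{\textrm{CM newforms in }S_k^{\textrm{new}}(N)\}.$$
By the previous corollary the right-hand side is a bounded function of $k$, and therefore so is $\CM(N,k)$.

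All of the genuine content sits in the preceding corollary (and ultimately in the Proposition), which bounds the number of CM newforms by exhibiting, for a fixed $N$, only finitely many admissible CM fields and moduli; the step carried out here contributes nothing beyond the trivial set-theoretic inequality ``number of orbits $\leq$ number of elements.'' For this reason I do not expect any real obstacle, and the proof should consist of little more than the display above.
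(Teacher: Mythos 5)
Your proof is correct and coincides with the paper's intended argument: the paper presents this corollary as an immediate consequence of the preceding one, precisely via the trivial inequality ``number of orbits $\leq$ number of elements permuted,'' with the Galois-stability of CM newforms (which you re-derive via $\e^\sigma=\e$ for quadratic $\e$) already recorded in Section~2. Nothing further is needed.
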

\remark Hopefully it is clear by how we counted the possible CM newforms for a given level and weight that the way CM newforms group in Galois orbits depends on the degree of the field extension containing the values of $\psi_f$ and on the extension defined by the generating values for $\psi$ mentioned in the proof of Lemma \ref{lem:Hecke}. The variety of possible combinations however is preventing us from giving a precise recipe for the orbits or just their number.

In the next subsection we study the case $N=p^2$ where $p$ is a prime number. We describe the possible number of CM newforms and we are also able to explain how they group into Galois orbits due to the simplicity of this situation.

\subsection{$N=p^2$, $p$ prime:}

Let $\psi$ be a $(p^2,k)$-suitable Hecke character of the imaginary quadratic field $\Q(\sqrt{-d})$, of discriminant $D$, with modulus $\mg$. As we have seen in the previous section, the $(p^2,k)$-suitability of $\psi$ implies that $D^2|N$. Since
$$D=\Big\{\begin{matrix}-d&d\equiv3\mod4\\-4d&\textrm{otherwise}\end{matrix}$$
we get that $D=-p$ and $p\equiv3\mod4$. So if $p\not\equiv3\mod4$, then $\CM(p^2,k)=0$ for all even $k$.

For the rest of the section we therefore assume $p\equiv3\mod4$, $d=p$ and $D=-p$. The $(p^2,k)$-suitbility of $\psi$ implies that $\Nm\mg=p^2/|D|=p$. In particular $\mg$ is the prime ideal of $\Q(\sqrt{-p})$ above $p$. Since $\Nm\mg=\#(\Oc/\mg)$ and $\Oc/\mg$ is an extension of $\F_p$ we get that $\Oc_K/\mg\isom\F_p$ and in particular $(\Oc_K/\mg)^*\isom(\Z/p\Z)^*$. This implies that $\psi_f$ is uniquely determined by $(D|.)$. Since the infinity type is uniquely determined by the weight $k$ we have that there is at most one $(p^2,k)$-suitable compatible pair depending on whether the integral units compatibility is satisfied or not. As we have seen in the previous section, for $p\neq 3$ it is immediately satisfied. In the case $p=3$ one has that $1=w^3\equiv w\mod\mg$ and therefore since
$$w^{k-1}=\psi_\infty(w)=\psi_f^{-1}(w)=1$$
we get that $k\equiv1\mod3$. Furthermore $\Q(\sqrt{-3})$ has class number 1 so when $k\equiv1\mod3$ the unique $(9,k)$-suitable compatible pair is associated to a unique Hecke character that takes values in $\Q(\sqrt{-3})$. This will in turn give us a unique CM newform with coefficients in $\Q$, and therefore a unique CM newform Galois orbit. To sum it up:
$$\CM(9,k)=\Big\{\begin{matrix}1&k\equiv4\mod6\\0&\textrm{otherwise}\end{matrix}$$
For $p\neq3$ the pair $(\psi_f,\psi_\infty)$ is always compatible independently of the weight $k$. Let $h_K$ be the class number of $\Q(\sqrt{-p})$. Lemma \ref{lem:Hecke} then implies that there exist $h_K$ different Hecke characters and therefore, by Theorem  \ref{thm:Miyake}, we get that there exist $h_K$ many CM newforms. The way these CM newforms group into Galois orbits for every weight $k$ depends on $h_K$. To see this one has to go into more detail in the proof of Lemma \ref{lem:Hecke}. For simplicity we assume that the class group of $K$ is cyclic (which is true for all the $p$ in our computational range but certainly not true in general). In this case a Hecke character $\psi$ is completely determined by the value of $\psi(\mathfrak{a})$, which must satisfy the relation
$$\psi^{h_K}(\mathfrak{a})=\pi^{k-1}.$$
where $\mathfrak{a}$ is an ideal away from $\mg$ generating the class group and $\pi\in K$ is such that $\mathfrak{a}^{h_K}=(\pi)$ and $\psi_f(\pi)=1$. The possible values for $\psi(\mathfrak{a})$ are then $(\pi^{k-1})^{\frac{1}{h_K}}\zeta_{h_K}^i$, where $\zeta_{h_K}$ is a primitive $h_K$-th root of unity. The field $L_\psi$ where $\psi$ takes its values is then $\Q(\psi({\mathfrak{a}}))$ and the field $L_F$ of the corresponding CM newform is a totally real subfield of $\Q(\psi({\mathfrak{a}}))$. Their relative degree is $[L_\psi:L_f]=2$. Lets us give a more detailed description for some primes $p$:

\begin{itemize}
\item $p=23$. In this case $h_K=3$, so we get 3 CM newforms. Let $\psi$ be one of the 3 $(23^2,k)$-suitable Hecke characters. For $k\not\equiv4\mod6$ the degree of $L_\psi$ is $6$ and therefore the degree of $L_f$ is $3$. In this case all 3 CM newforms fall into a single Galois orbit. In the case $k\equiv4\mod6$ one of the Hecke characters has field $L_\psi=K$ and the other two have field $L_\psi=K(\zeta)$ which is of degree 4. In this case we get one CM form with rational coefficients that forms a Galois orbit by itself and two with coefficients in a real quadratic extension, that group together in another Galois orbit. To summarize:
$$\CM(23^2,k)=\Big\{\begin{matrix}2&k\equiv4\mod6\\1&\textrm{otherwise}\end{matrix}.$$
Clearly the same holds for any prime $p\equiv3\mod4$ for which $K$ has class number $h_K=3$.
\item $p=47$. In this case $h_K=5$, so we get 5 CM newforms and we have:
$$\CM(47^2,k)=\Big\{\begin{matrix}2&k\equiv6\mod10\\1&\textrm{otherwise}\end{matrix}.$$
Again the same formula holds for any prime $p\equiv3\mod4$ such that $h_K=5$.
\item Let $p\equiv3\mod4$ such that $h_k=\ell$, $\ell>2$ a rational prime. In this case there are $\ell$ CM newforms and we have:
$$\CM(p^2,k)=\Big\{\begin{matrix}2&k\equiv\ell+1\mod2\ell\\1&\textrm{otherwise}\end{matrix}.$$
\end{itemize}

\section{Observations}\label{sec:observations}

The following tables contain the suggested values for $\NCM(N)$ for $1\leq N \leq 100$ and, if it eventually seems to stabilize, the number of CM newform Galois orbits for the same level. These are observations that we got by looking at our computational data which we have included in Appendix \ref{app:Data}.
For every level we have computed $\NCM(N,k)$ and $\CM(N,k)$ for all weights up to at least $30$. For all $N$ in this range it is clear from the data we have collected that $\NCM(N,k)$ eventually stabilizes to some $\NCM(N)$. On the other hand $\CM(N,k)$ has a few cases where it does not stabilize (which we expected after the analysis of Section \ref{sec:CM}) but its behavior is clear even from the first few weights. We also provide the factorization of the level $N$ into prime powers which helps to observe the multiplicativity of $\NCM(N)$. To compute $\CM(N,k)$ and $\NCM(N,k)$ for level $N$ and weight $k$ we used standard commands of MAGMA \cite{MAGMA}, version 2.17 and higher:
\\
\\
\begin{tabular}{||r|r|r|r||r|r|r|r||r|r|r|r||r|r|r|r||}
\hline
N&fact&\begin{tiny}$\NCM$\end{tiny}&\begin{tiny}$\CM$\end{tiny}&N&fact&\begin{tiny}$\NCM$\end{tiny}&\begin{tiny}$\CM$\end{tiny}&N&fact&\begin{tiny}$\NCM$\end{tiny}&\begin{tiny}$\CM$\end{tiny}&N&fact&\begin{tiny}$\NCM$\end{tiny}&\begin{tiny}$\CM$\end{tiny}\\
\hline
1&$1$&1&0&26&$2^113^1$&4&0&51&$3^117^1$&4&0&76&$2^219^1$&2&0\\
2&$2^1$&2&0&27&$3^3$&4&1,0,1&52&$2^213^1$&2&0&77&$7^111^1$&4&0\\
3&$3^1$&2&0&28&$2^27^1$&2&0&53&$53^1$&2&0&78&$2^13^113^1$&8&0\\
4&$2^2$&1&0&29&$29^1$&2&0&54&$2^13^3$&8&0&79&$79^1$&2&0\\
5&$5^1$&2&0&30&$2^13^15^1$&8&0&55&$5^111^1$&4&0&80&$2^45^1$&12&0\\
6&$2^13^1$&4&0&31&$31^1$&2&0&56&$2^37^1$&4&0&81&$3^4$&5&0\\
7&$7^1$&2&0&32&$2^5$&4&1&57&$3^119^1$&4&0&82&$2^141^1$&4&0\\
8&$2^3$&2&0&33&$3^111^1$&4&0&58&$2^129^1$&4&0&83&$83^1$&2&0\\
9&$3^2$&4&0,1,0&34&$2^117^1$&4&0&59&$59^1$&2&0&84&$2^23^17^1$&4&0\\
10&$2^15^1$&4&0&35&$5^17^1$&4&0&60&$2^23^15^1$&4&0&85&$5^117^1$&4&0\\
11&$11^1$&2&0&36&$2^23^2$&4&1,0,1&61&$61^1$&2&0&86&$2^143^1$&4&0\\
12&$2^23^1$&2&0&37&$37^1$&2&0&62&$2^131^1$&4&0&87&$3^129^1$&4&0\\
13&$13^1$&2&0&38&$2^119^1$&4&0&63&$3^27^1$&8&0&88&$2^311^1$&4&0\\
14&$2^17^1$&4&0&39&$3^113^1$&4&0&64&$2^6$&16&1&89&$89^1$&2&0\\
15&$3^15^1$&4&0&40&$2^35^1$&4&0&65&$5^113^1$&4&0&90&$2^13^25^1$&16&0\\
16&$2^4$&6&0&41&$41^1$&2&0&66&$2^13^111^1$&8&0&91&$7^113^1$&4&0\\
17&$17^1$&2&0&42&$2^13^17^1$&8&0&67&$67^1$&2&0&92&$2^223^1$&2&0\\
18&$2^13^2$&8&1,0,1&43&$43^1$&2&0&68&$2^217^1$&2&0&93&$3^131^1$&4&0\\
19&$19^1$&2&0&44&$2^211^1$&2&0&69&$3^123^1$&4&0&94&$2^147^1$&4&0\\
20&$2^25^1$&2&0&45&$3^25^1$&8&0&70&$2^15^17^1$&8&0&95&$5^119^1$&4&0\\
21&$3^17^1$&4&0&46&$2^123^1$&4&0&71&$71^1$&2&0&96&$2^53^1$&8&0\\
22&$2^111^1$&4&0&47&$47^1$&2&0&72&$2^33^2$&8&0&97&$97^1$&2&0\\
23&$23^1$&2&0&48&$2^43^1$&12&0&73&$73^1$&2&0&98&$2^17^2$&14&0\\
24&$2^33^1$&4&0&49&$7^2$&7&1&74&$2^137^1$&4&0&99&$3^211^1$&8&0\\
25&$5^2$&6&0&50&$2^15^2$&12&0&75&$3^15^2$&12&0&100&$2^25^2$&6&0\\
\end{tabular}

For level $9$ we eventually have that $\NCM(9) = 4$, $\CM(9,k)=0$ for $k\equiv0,2\mod6$ and $\CM(9,k)=1$ for $k\equiv4\mod6$.

For level $27$ we eventually have that $\NCM(27) = 4$, $\CM(27,k)=1$ for $k\equiv0,2\mod6$ and $\CM(27,k)=0$ for $k\equiv4\mod6$.

For level $36$ we eventually have that $\NCM(36) = 4$, $\CM(36,k)=1$ for $k\equiv0,2\mod6$ and $\CM(36,k)=0$ for $k\equiv4\mod6$.

The following table contains the same information as the previous one but for levels $101\leq N \leq 200$. We used weights up to 30 with the exception of a few cases where we computed more weights to reach a more convincing point.\\
\\
\begin{tabular}{||r|r|r|r||r|r|r|r||r|r|r|r||r|r|r|r||}
\hline
N&fact&\begin{tiny}$\NCM$\end{tiny}&\begin{tiny}$\CM$\end{tiny}&N&fact&\begin{tiny}$\NCM$\end{tiny}&\begin{tiny}$\CM$\end{tiny}&N&fact&\begin{tiny}$\NCM$\end{tiny}&\begin{tiny}$\CM$\end{tiny}&N&fact&\begin{tiny}$\NCM$\end{tiny}&\begin{tiny}$\CM$\end{tiny}\\
\hline
101&$101^1$&2&0&126&$2^13^27^1$&16&0&151&$151^1$&2&0&176&$2^411^1$&12&0\\
102&$2^13^117^1$&8&0&127&$127^1$&2&0&152&$2^319^1$&4&0&177&$3^159^1$&4&0\\
103&$103^1$&2&0&128&$2^7$&8&0&153&$3^217^1$&8&0&178&$2^189^1$&4&0\\
104&$2^313^1$&4&0&129&$3^143^1$&4&0&154&$2^17^111^1$&8&0&179&$179^1$&2&0\\
105&$3^15^17^1$&8&0&130&$2^15^113^1$&8&0&155&$5^131^1$&4&0&180&$2^23^25^1$&8&0\\
106&$2^153^1$&4&0&131&$131^1$&2&0&156&$2^23^113^1$&4&0&181&$181^1$&2&0\\
107&$107^1$&2&0&132&$2^23^111^1$&4&0&157&$157^1$&2&0&182&$2^17^113^1$&8&0\\
108&$2^23^3$&4&1,2,1&133&$7^119^1$&4&0&158&$2^179^1$&4&0&183&$3^161^1$&4&0\\
109&$109^1$&2&0&134&$2^167^1$&4&0&159&$3^153^1$&4&0&184&$2^323^1$&4&0\\
110&$2^15^111^1$&8&0&135&$3^35^1$&8&0&160&$2^55^1$&8&0&185&$5^137^1$&4&0\\
111&$3^137^1$&4&0&136&$2^317^1$&4&0&161&$7^123^1$&4&0&186&$2^13^131^1$&8&0\\
112&$2^47^1$&12&0&137&$137^1$&2&0&162&$2^13^4$&10&0&187&$11^117^1$&4&0\\
113&$113^1$&2&0&138&$2^13^123^1$&8&0&163&$163^1$&2&0&188&$2^247^1$&2&0\\
114&$2^13^119^1$&8&0&139&$139^1$&2&0&164&$2^241^1$&2&0&189&$3^37^1$&8&0\\
115&$5^123^1$&4&0&140&$2^25^17^1$&4&0&165&$3^15^111^1$&8&0&190&$2^15^119^1$&8&0\\
116&$2^229^1$&2&0&141&$3^147^1$&4&0&166&$2^183^1$&4&0&191&$191^1$&2&0\\
117&$3^213^1$&8&0&142&$2^171^1$&4&0&167&$167^1$&2&0&192&$2^63^1$&32&0\\
118&$2^159^1$&4&0&143&$11^113^1$&4&0&168&$2^33^17^1$&8&0&193&$193^1$&2&0\\
119&$7^117^1$&4&0&144&$2^43^2$&24&1&169&$13^2$&9&0&194&$2^197^1$&4&0\\
120&$2^33^15^1$&8&0&145&$5^129^1$&4&0&170&$2^15^117^1$&8&0&195&$3^15^113^1$&8&0\\
121&$11^2$&9&1&146&$2^173^1$&4&0&171&$3^219^1$&8&0&196&$2^27^2$&7&0\\
122&$2^161^1$&4&0&147&$3^17^2$&14&0&172&$2^243^1$&2&0&197&$197^1$&2&0\\
123&$3^141^1$&4&0&148&$2^237^1$&2&0&173&$173^1$&2&0&198&$2^13^211^1$&16&0\\
124&$2^231^1$&2&0&149&$149^1$&2&0&174&$2^13^129^1$&8&0&199&$199^1$&2&0\\
125&$5^3$&4&0&150&$2^13^15^2$&24&0&175&$5^27^1$&12&0&200&$2^35^2$&12&0\\
\end{tabular}

For level $108$ we eventually have that $\NCM(108) = 4$, $\CM(108,k)=1$ for $k\equiv0,2\mod6$ and $\CM(108,k)=2$ for $k\equiv4\mod6$.

\section{Acknowledgments}
I would like to thank Gabor Wiese at Universit\'{e} du Luxembourg for providing access to his computing server, where most of the above computations were performed, and for many useful suggestions and comments. I would also like to thank Ulrich G\"{o}rtz at Institut f\"{u}r Experimentelle Mathematik of Universit\"{a}t Duisburg-Essen for providing access to his computing server, where some of the above computations were performed. Finally I would like to thank William Stein for the very useful suggestions he has made. This research was supported by the SPP 1489 priority program of the Deutsche Forschungsgemeinschaft.

\appendix

\section{Extended Computational Data}\label{app:Data}

In this section we provide tables with the data on which we based our observations mentioned in the previous sections. Every table below consists of rows corresponding to weights and columns corresponding to levels. For each level and weight we give a pair consisting of the number $\NCM(N,k)$ of non-CM newform Galois orbits in $S^{\textrm{new}}_k(N)$ and the number $\CM(N,k)$ of the CM Galois orbits in $S^{\textrm{new}}_k(N)$. We have done this systematically for all levels $1\leq N\leq 200$ and even weights $2\leq k\leq30$ at least. We also provide $\NCM(N,k)$ and $\CM(N,k)$ but for a much smaller weight range for specific levels $N > 200$ that are mostly prime powers. The goal was to understand the way $\NCM$ behaves on prime powers since, if one assumes that it is well defined and multiplicative, it would completely determine it. One can spot some patterns but the data range for these levels is too small to dismiss the possibility of a coincidence.\\
\\
\\
\begin{tabular}{r||r|r|r|r|r|r|r|r|r|r|}
&{\color{red}1}&{\color{red}2}&{\color{red}3}&{\color{red}4}&{\color{red}5}&{\color{red}6}&{\color{red}7}&{\color{red}8}&{\color{red}9}&{\color{red}10}\\
&{\color{red}$1$}&{\color{red}$2^1$}&{\color{red}$3^1$}&{\color{red}$2^2$}&{\color{red}$5^1$}&{\color{red}$2^13^1$}&{\color{red}$7^1$}&{\color{red}$2^3$}&{\color{red}$3^2$}&{\color{red}$2^15^1$}\\
{\color{green}2}&0 0&0 0&0 0&0 0&0 0&0 0&0 0&0 0&0 0&0 0\\
{\color{green}4}&0 0&0 0&0 0&0 0&1 0&1 0&1 0&1 0&0 1&1 0\\
{\color{green}6}&0 0&0 0&1 0&1 0&1 0&1 0&2 0&1 0&1 0&3 0\\
{\color{green}8}&0 0&1 0&1 0&0 0&2 0&1 0&2 0&2 0&2 0&1 0\\
{\color{green}10}&0 0&1 0&2 0&1 0&2 0&1 0&2 0&2 0&2 1&3 0\\
{\color{green}12}&1 0&0 0&1 0&1 0&2 0&3 0&2 0&2 0&3 0&4 0\\
{\color{green}14}&0 0&2 0&2 0&1 0&2 0&1 0&2 0&2 0&3 0&3 0\\
{\color{green}16}&1 0&1 0&2 0&1 0&2 0&3 0&2 0&3 0&4 1&4 0\\
{\color{green}18}&1 0&1 0&2 0&1 0&2 0&3 0&2 0&2 0&4 0&4 0\\
{\color{green}20}&1 0&2 0&2 0&1 0&2 0&3 0&2 0&2 0&4 0&4 0\\
{\color{green}22}&1 0&2 0&3 0&1 0&2 0&3 0&2 0&2 0&5 1&4 0\\
{\color{green}24}&1 0&1 0&2 0&1 0&2 0&4 0&2 0&2 0&4 0&4 0\\
{\color{green}26}&1 0&2 0&2 0&1 0&2 0&3 0&2 0&2 0&4 0&4 0\\
{\color{green}28}&1 0&2 0&2 0&1 0&2 0&4 0&2 0&2 0&4 1&4 0\\
{\color{green}30}&1 0&2 0&2 0&1 0&2 0&4 0&2 0&2 0&4 0&4 0\\
{\color{green}32}&1 0&2 0&2 0&1 0&2 0&4 0&2 0&2 0&4 0&4 0\\
{\color{green}34}&1 0&2 0&2 0&1 0&2 0&4 0&2 0&2 0&4 1&4 0\\
{\color{green}36}&1 0&2 0&2 0&1 0&2 0&4 0&2 0&2 0&4 0&4 0\\
{\color{green}38}&1 0&2 0&2 0&1 0&2 0&4 0&2 0&2 0&4 0&4 0\\
{\color{green}40}&1 0&2 0&2 0&1 0&2 0&4 0&2 0&2 0&4 1&4 0\\
\end{tabular}\\
\\
\begin{tabular}{r||r|r|r|r|r|r|r|r|r|r|}
&{\color{red}11}&{\color{red}12}&{\color{red}13}&{\color{red}14}&{\color{red}15}&{\color{red}16}&{\color{red}17}&{\color{red}18}&{\color{red}19}&{\color{red}20}\\
&{\color{red}$11^1$}&{\color{red}$2^23^1$}&{\color{red}$13^1$}&{\color{red}$2^17^1$}&{\color{red}$3^15^1$}&{\color{red}$2^4$}&{\color{red}$17^1$}&{\color{red}$2^13^2$}&{\color{red}$19^1$}&{\color{red}$2^25^1$}\\
{\color{green}2}&1 0&0 0&0 0&1 0&1 0&0 0&1 0&0 0&1 0&1 0\\
{\color{green}4}&1 0&1 0&2 0&2 0&2 0&1 0&2 0&1 0&2 0&1 0\\
{\color{green}6}&2 0&0 0&2 0&2 0&3 0&2 0&3 0&3 0&4 0&1 0\\
{\color{green}8}&2 0&2 0&3 0&3 0&3 0&3 0&3 0&2 0&2 0&2 0\\
{\color{green}10}&2 0&1 0&2 0&3 0&4 0&4 0&2 0&4 0&2 0&2 0\\
{\color{green}12}&2 0&2 0&2 0&4 0&4 0&4 0&2 0&5 0&2 0&2 0\\
{\color{green}14}&2 0&2 0&2 0&4 0&4 0&5 0&2 0&5 0&2 0&2 0\\
{\color{green}16}&2 0&2 0&2 0&4 0&4 0&6 0&2 0&6 0&2 0&2 0\\
{\color{green}18}&2 0&2 0&2 0&4 0&4 0&5 0&2 0&6 0&2 0&2 0\\
{\color{green}20}&2 0&2 0&2 0&4 0&4 0&6 0&2 0&7 0&2 0&2 0\\
{\color{green}22}&2 0&2 0&2 0&4 0&5 0&6 0&2 0&7 0&2 0&2 0\\
{\color{green}24}&2 0&2 0&2 0&4 0&4 0&5 0&2 0&7 0&2 0&2 0\\
{\color{green}26}&2 0&2 0&2 0&4 0&4 0&6 0&2 0&7 0&2 0&2 0\\
{\color{green}28}&2 0&2 0&2 0&4 0&4 0&6 0&2 0&8 0&2 0&2 0\\
{\color{green}30}&2 0&2 0&2 0&4 0&4 0&6 0&2 0&8 0&2 0&2 0\\
{\color{green}32}&2 0&2 0&2 0&4 0&4 0&6 0&2 0&8 0&2 0&2 0\\
{\color{green}34}&2 0&2 0&2 0&4 0&4 0&6 0&2 0&8 0&2 0&2 0\\
{\color{green}36}&2 0&2 0&2 0&4 0&4 0&6 0&2 0&8 0&2 0&2 0\\
{\color{green}38}&2 0&2 0&2 0&4 0&4 0&6 0&2 0&8 0&2 0&2 0\\
{\color{green}40}&2 0&2 0&2 0&4 0&4 0&6 0&2 0&8 0&2 0&2 0\\
\end{tabular}\\
\\
\begin{tabular}{r||r|r|r|r|r|r|r|r|r|r|}
&{\color{red}21}&{\color{red}22}&{\color{red}23}&{\color{red}24}&{\color{red}25}&{\color{red}26}&{\color{red}27}&{\color{red}28}&{\color{red}29}&{\color{red}30}\\
&{\color{red}$3^17^1$}&{\color{red}$2^111^1$}&{\color{red}$23^1$}&{\color{red}$2^33^1$}&{\color{red}$5^2$}&{\color{red}$2^113^1$}&{\color{red}$3^3$}&{\color{red}$2^27^1$}&{\color{red}$29^1$}&{\color{red}$2^13^15^1$}\\
{\color{green}2}&1 0&0 0&1 0&1 0&0 0&2 0&0 1&0 0&1 0&1 0\\
{\color{green}4}&3 0&3 0&2 0&1 0&3 0&3 0&3 0&2 0&2 0&2 0\\
{\color{green}6}&4 0&4 0&2 0&3 0&4 0&3 0&3 1&2 0&2 0&2 0\\
{\color{green}8}&4 0&4 0&2 0&3 0&5 0&5 0&4 1&2 0&2 0&6 0\\
{\color{green}10}&4 0&5 0&2 0&4 0&5 0&5 0&4 0&2 0&2 0&6 0\\
{\color{green}12}&5 0&4 0&2 0&4 0&6 0&4 0&4 1&2 0&2 0&6 0\\
{\color{green}14}&4 0&4 0&2 0&4 0&5 0&4 0&4 1&2 0&2 0&8 0\\
{\color{green}16}&4 0&4 0&2 0&4 0&6 0&4 0&4 0&2 0&2 0&8 0\\
{\color{green}18}&4 0&4 0&2 0&4 0&6 0&4 0&4 1&2 0&2 0&8 0\\
{\color{green}20}&4 0&5 0&2 0&4 0&6 0&4 0&4 1&2 0&2 0&8 0\\
{\color{green}22}&4 0&4 0&2 0&4 0&6 0&4 0&4 0&2 0&2 0&8 0\\
{\color{green}24}&4 0&4 0&2 0&4 0&6 0&4 0&4 1&2 0&2 0&8 0\\
{\color{green}26}&4 0&4 0&2 0&4 0&6 0&4 0&4 1&2 0&2 0&8 0\\
{\color{green}28}&4 0&4 0&2 0&4 0&6 0&4 0&4 0&2 0&2 0&8 0\\
{\color{green}30}&4 0&4 0&2 0&4 0&6 0&4 0&4 1&2 0&2 0&8 0\\
{\color{green}32}&4 0&4 0&2 0&4 0&6 0&4 0&4 1&2 0&2 0&8 0\\
{\color{green}34}&4 0&4 0&2 0&4 0&6 0&4 0&4 0&2 0&2 0&8 0\\
{\color{green}36}&4 0&4 0&2 0&4 0&6 0&4 0&4 1&2 0&2 0&8 0\\
{\color{green}38}&4 0&4 0&2 0&4 0&6 0&4 0&4 1&2 0&2 0&8 0\\
{\color{green}40}&4 0&4 0&2 0&4 0&6 0&4 0&4 0&2 0&2 0&8 0\\
\end{tabular}\\
\\
\begin{tabular}{r||r|r|r|r|r|r|r|r|r|r|}
&{\color{red}31}&{\color{red}32}&{\color{red}33}&{\color{red}34}&{\color{red}35}&{\color{red}36}&{\color{red}37}&{\color{red}38}&{\color{red}39}&{\color{red}40}\\
&{\color{red}$31^1$}&{\color{red}$2^5$}&{\color{red}$3^111^1$}&{\color{red}$2^117^1$}&{\color{red}$5^17^1$}&{\color{red}$2^23^2$}&{\color{red}$37^1$}&{\color{red}$2^119^1$}&{\color{red}$3^113^1$}&{\color{red}$2^35^1$}\\
{\color{green}2}&1 0&0 1&1 0&1 0&2 0&0 1&2 0&2 0&2 0&1 0\\
{\color{green}4}&2 0&2 1&4 0&3 0&3 0&1 0&2 0&3 0&3 0&3 0\\
{\color{green}6}&2 0&3 1&5 0&4 0&4 0&1 1&2 0&4 0&4 0&4 0\\
{\color{green}8}&2 0&3 1&5 0&5 0&4 0&2 1&2 0&5 0&4 0&4 0\\
{\color{green}10}&2 0&4 1&4 0&4 0&5 0&3 0&2 0&5 0&4 0&4 0\\
{\color{green}12}&2 0&4 1&4 0&4 0&4 0&3 1&2 0&4 0&4 0&5 0\\
{\color{green}14}&2 0&4 1&5 0&4 0&4 0&4 1&2 0&4 0&4 0&4 0\\
{\color{green}16}&2 0&4 1&4 0&4 0&4 0&4 0&2 0&4 0&4 0&4 0\\
{\color{green}18}&2 0&5 1&4 0&4 0&4 0&4 1&2 0&4 0&4 0&4 0\\
{\color{green}20}&2 0&4 1&4 0&4 0&4 0&4 1&2 0&4 0&4 0&4 0\\
{\color{green}22}&2 0&4 1&4 0&4 0&4 0&4 0&2 0&4 0&4 0&4 0\\
{\color{green}24}&2 0&4 1&4 0&4 0&4 0&4 1&2 0&4 0&4 0&4 0\\
{\color{green}26}&2 0&4 1&4 0&4 0&4 0&4 1&2 0&4 0&4 0&4 0\\
{\color{green}28}&2 0&4 1&4 0&4 0&4 0&4 0&2 0&4 0&4 0&4 0\\
{\color{green}30}&2 0&4 1&4 0&4 0&4 0&4 1&2 0&4 0&4 0&4 0\\
{\color{green}32}&2 0&4 1&4 0&4 0&4 0&4 1&2 0&4 0&4 0&4 0\\
{\color{green}34}&2 0&4 1&4 0&4 0&4 0&4 0&2 0&4 0&4 0&4 0\\
{\color{green}36}&2 0&4 1&4 0&4 0&4 0&4 1&2 0&4 0&4 0&4 0\\
{\color{green}38}&2 0&4 1&4 0&4 0&4 0&4 1&2 0&4 0&4 0&4 0\\
{\color{green}40}&2 0&4 1&4 0&4 0&4 0&4 0&2 0&4 0&4 0&4 0\\
\end{tabular}\\
\\
\begin{tabular}{r||r|r|r|r|r|r|r|r|r|r|}
&{\color{red}41}&{\color{red}42}&{\color{red}43}&{\color{red}44}&{\color{red}45}&{\color{red}46}&{\color{red}47}&{\color{red}48}&{\color{red}49}&{\color{red}50}\\
&{\color{red}$41^1$}&{\color{red}$2^13^17^1$}&{\color{red}$43^1$}&{\color{red}$2^211^1$}&{\color{red}$3^25^1$}&{\color{red}$2^123^1$}&{\color{red}$47^1$}&{\color{red}$2^43^1$}&{\color{red}$7^2$}&{\color{red}$2^15^2$}\\
{\color{green}2}&1 0&1 0&2 0&1 0&1 0&1 0&1 0&1 0&0 1&2 0\\
{\color{green}4}&2 0&2 0&2 0&2 0&5 0&4 0&2 0&3 0&4 1&5 0\\
{\color{green}6}&2 0&6 0&2 0&2 0&6 0&4 0&2 0&5 0&6 1&7 0\\
{\color{green}8}&2 0&6 0&2 0&2 0&9 0&4 0&2 0&7 0&6 1&10 0\\
{\color{green}10}&2 0&8 0&2 0&2 0&8 0&5 0&2 0&8 0&6 1&10 0\\
{\color{green}12}&2 0&8 0&2 0&2 0&8 0&4 0&2 0&10 0&7 1&10 0\\
{\color{green}14}&2 0&9 0&2 0&2 0&8 0&4 0&2 0&9 0&6 1&11 0\\
{\color{green}16}&2 0&9 0&2 0&2 0&8 0&4 0&2 0&11 0&7 1&11 0\\
{\color{green}18}&2 0&8 0&2 0&2 0&8 0&4 0&2 0&11 0&7 1&11 0\\
{\color{green}20}&2 0&8 0&2 0&2 0&8 0&4 0&2 0&11 0&7 1&12 0\\
{\color{green}22}&2 0&8 0&2 0&2 0&9 0&4 0&2 0&12 0&7 1&12 0\\
{\color{green}24}&2 0&8 0&2 0&2 0&8 0&4 0&2 0&12 0&7 1&11 0\\
{\color{green}26}&2 0&8 0&2 0&2 0&8 0&4 0&2 0&11 0&7 1&12 0\\
{\color{green}28}&2 0&8 0&2 0&2 0&8 0&4 0&2 0&12 0&7 1&12 0\\
{\color{green}30}&2 0&8 0&2 0&2 0&8 0&4 0&2 0&12 0&7 1&12 0\\
{\color{green}32}&2 0&8 0&2 0&2 0&8 0&4 0&2 0&12 0&7 1&12 0\\
{\color{green}34}&2 0&8 0&2 0&2 0&8 0&4 0&2 0&12 0&7 1&12 0\\
{\color{green}36}&2 0&8 0&2 0&2 0&8 0&4 0&2 0&12 0&7 1&12 0\\
{\color{green}38}&2 0&8 0&2 0&2 0&8 0&4 0&2 0&12 0&7 1&12 0\\
{\color{green}40}&2 0&8 0&2 0&2 0&8 0&4 0&2 0&12 0&7 1&12 0\\
\end{tabular}\\
\\
\begin{tabular}{r||r|r|r|r|r|r|r|r|r|r|}
&{\color{red}51}&{\color{red}52}&{\color{red}53}&{\color{red}54}&{\color{red}55}&{\color{red}56}&{\color{red}57}&{\color{red}58}&{\color{red}59}&{\color{red}60}\\
&{\color{red}$3^117^1$}&{\color{red}$2^213^1$}&{\color{red}$53^1$}&{\color{red}$2^13^3$}&{\color{red}$5^111^1$}&{\color{red}$2^37^1$}&{\color{red}$3^119^1$}&{\color{red}$2^129^1$}&{\color{red}$59^1$}&{\color{red}$2^23^15^1$}\\
{\color{green}2}&2 0&1 0&2 0&2 0&2 0&2 0&3 0&2 0&1 0&0 0\\
{\color{green}4}&5 0&2 0&3 0&4 0&4 0&3 0&4 0&4 0&3 0&2 0\\
{\color{green}6}&4 0&3 0&2 0&6 0&4 0&5 0&6 0&4 0&2 0&4 0\\
{\color{green}8}&5 0&3 0&2 0&8 0&4 0&5 0&4 0&4 0&2 0&4 0\\
{\color{green}10}&4 0&2 0&2 0&8 0&4 0&4 0&4 0&4 0&2 0&4 0\\
{\color{green}12}&4 0&2 0&2 0&8 0&4 0&4 0&4 0&4 0&2 0&4 0\\
{\color{green}14}&4 0&2 0&2 0&8 0&4 0&4 0&4 0&4 0&2 0&5 0\\
{\color{green}16}&4 0&2 0&2 0&8 0&4 0&4 0&4 0&4 0&2 0&4 0\\
{\color{green}18}&4 0&2 0&2 0&8 0&4 0&4 0&4 0&4 0&2 0&4 0\\
{\color{green}20}&4 0&2 0&2 0&8 0&4 0&4 0&4 0&4 0&2 0&4 0\\
{\color{green}22}&4 0&2 0&2 0&8 0&4 0&4 0&4 0&4 0&2 0&4 0\\
{\color{green}24}&4 0&2 0&2 0&8 0&4 0&4 0&4 0&4 0&2 0&4 0\\
{\color{green}26}&4 0&2 0&2 0&8 0&4 0&4 0&4 0&4 0&2 0&4 0\\
{\color{green}28}&4 0&2 0&2 0&8 0&4 0&4 0&4 0&4 0&2 0&4 0\\
{\color{green}30}&4 0&2 0&2 0&8 0&4 0&4 0&4 0&4 0&2 0&4 0\\
{\color{green}32}&4 0&2 0&2 0&8 0&4 0&4 0&4 0&4 0&2 0&4 0\\
{\color{green}34}&4 0&2 0&2 0&8 0&4 0&4 0&4 0&4 0&2 0&4 0\\
{\color{green}36}&4 0&2 0&2 0&8 0&4 0&4 0&4 0&4 0&2 0&-\ -\\
{\color{green}38}&4 0&2 0&2 0&8 0&4 0&4 0&4 0&4 0&2 0&-\ -\\
{\color{green}40}&4 0&2 0&2 0&8 0&4 0&4 0&4 0&4 0&2 0&-\ -\\
\end{tabular}\\
\\
\begin{tabular}{r||r|r|r|r|r|r|r|r|r|r|}
&{\color{red}61}&{\color{red}62}&{\color{red}63}&{\color{red}64}&{\color{red}65}&{\color{red}66}&{\color{red}67}&{\color{red}68}&{\color{red}69}&{\color{red}70}\\
&{\color{red}$61^1$}&{\color{red}$2^131^1$}&{\color{red}$3^27^1$}&{\color{red}$2^6$}&{\color{red}$5^113^1$}&{\color{red}$2^13^111^1$}&{\color{red}$67^1$}&{\color{red}$2^217^1$}&{\color{red}$3^123^1$}&{\color{red}$2^15^17^1$}\\
{\color{green}2}&2 0&2 0&2 0&0 1&3 0&3 0&3 0&1 0&2 0&1 0\\
{\color{green}4}&2 0&4 0&5 0&4 1&5 0&3 0&2 0&2 0&4 0&6 0\\
{\color{green}6}&2 0&4 0&8 0&7 1&5 0&6 0&2 0&2 0&5 0&8 0\\
{\color{green}8}&2 0&4 0&8 0&9 1&4 0&7 0&2 0&2 0&4 0&8 0\\
{\color{green}10}&2 0&5 0&8 0&12 1&4 0&8 0&2 0&2 0&4 0&9 0\\
{\color{green}12}&2 0&4 0&9 0&12 1&4 0&8 0&2 0&2 0&4 0&8 0\\
{\color{green}14}&2 0&4 0&8 0&14 1&4 0&8 0&2 0&2 0&4 0&8 0\\
{\color{green}16}&2 0&4 0&8 0&16 1&4 0&8 0&2 0&2 0&4 0&8 0\\
{\color{green}18}&2 0&4 0&8 0&15 1&4 0&8 0&2 0&2 0&4 0&8 0\\
{\color{green}20}&2 0&4 0&8 0&16 1&4 0&8 0&2 0&2 0&4 0&8 0\\
{\color{green}22}&2 0&4 0&8 0&16 1&4 0&8 0&2 0&2 0&4 0&8 0\\
{\color{green}24}&2 0&4 0&8 0&14 1&4 0&8 0&2 0&2 0&4 0&8 0\\
{\color{green}26}&2 0&4 0&8 0&16 1&4 0&8 0&2 0&2 0&4 0&8 0\\
{\color{green}28}&2 0&4 0&8 0&16 1&4 0&8 0&2 0&2 0&4 0&8 0\\
{\color{green}30}&2 0&4 0&8 0&16 1&4 0&8 0&2 0&2 0&4 0&8 0\\
{\color{green}32}&2 0&4 0&8 0&16 1&4 0&8 0&2 0&2 0&4 0&8 0\\
{\color{green}34}&2 0&-\ -&-\ -&-\ -&-\ -&-\ -&-\ -&-\ -&-\ -&-\ -\\
{\color{green}36}&-\ -&-\ -&-\ -&-\ -&-\ -&-\ -&-\ -&-\ -&-\ -&-\ -\\
{\color{green}38}&-\ -&-\ -&-\ -&-\ -&-\ -&-\ -&-\ -&-\ -&-\ -&-\ -\\
{\color{green}40}&-\ -&-\ -&-\ -&-\ -&-\ -&-\ -&-\ -&-\ -&-\ -&-\ -\\
\end{tabular}\\
\\
\begin{tabular}{r||r|r|r|r|r|r|r|r|r|r|}
&{\color{red}71}&{\color{red}72}&{\color{red}73}&{\color{red}74}&{\color{red}75}&{\color{red}76}&{\color{red}77}&{\color{red}78}&{\color{red}79}&{\color{red}80}\\
&{\color{red}$71^1$}&{\color{red}$2^33^2$}&{\color{red}$73^1$}&{\color{red}$2^137^1$}&{\color{red}$3^15^2$}&{\color{red}$2^219^1$}&{\color{red}$7^111^1$}&{\color{red}$2^13^113^1$}&{\color{red}$79^1$}&{\color{red}$2^45^1$}\\
{\color{green}2}&2 0&1 0&3 0&2 0&3 0&1 0&4 0&1 0&2 0&2 0\\
{\color{green}4}&3 0&4 0&3 0&4 0&6 0&2 0&5 0&6 0&3 0&6 0\\
{\color{green}6}&2 0&6 0&2 0&5 0&10 0&2 0&5 0&8 0&2 0&9 0\\
{\color{green}8}&2 0&7 0&2 0&4 0&10 0&2 0&4 0&8 0&2 0&9 0\\
{\color{green}10}&2 0&8 0&2 0&4 0&12 0&2 0&4 0&8 0&2 0&11 0\\
{\color{green}12}&2 0&8 0&2 0&4 0&11 0&2 0&4 0&8 0&2 0&13 0\\
{\color{green}14}&2 0&8 0&2 0&4 0&12 0&2 0&4 0&8 0&2 0&11 0\\
{\color{green}16}&2 0&9 0&2 0&4 0&12 0&2 0&4 0&8 0&2 0&12 0\\
{\color{green}18}&2 0&8 0&2 0&4 0&12 0&2 0&4 0&8 0&2 0&12 0\\
{\color{green}20}&2 0&8 0&2 0&4 0&12 0&2 0&4 0&8 0&2 0&12 0\\
{\color{green}22}&2 0&8 0&2 0&4 0&14 0&2 0&4 0&8 0&2 0&12 0\\
{\color{green}24}&2 0&8 0&2 0&4 0&12 0&2 0&4 0&8 0&2 0&12 0\\
{\color{green}26}&2 0&8 0&2 0&4 0&12 0&2 0&4 0&8 0&2 0&12 0\\
{\color{green}28}&2 0&8 0&2 0&4 0&12 0&2 0&4 0&8 0&2 0&12 0\\
{\color{green}30}&2 0&8 0&2 0&4 0&12 0&2 0&4 0&8 0&2 0&12 0\\
{\color{green}32}&2 0&8 0&2 0&4 0&12 0&2 0&4 0&8 0&2 0&-\ -\\
{\color{green}34}&-\ -&-\ -&-\ -&-\ -&-\ -&-\ -&-\ -&-\ -&-\ -&-\ -\\
{\color{green}36}&-\ -&-\ -&-\ -&-\ -&-\ -&-\ -&-\ -&-\ -&-\ -&-\ -\\
{\color{green}38}&-\ -&-\ -&-\ -&-\ -&-\ -&-\ -&-\ -&-\ -&-\ -&-\ -\\
{\color{green}40}&-\ -&-\ -&-\ -&-\ -&-\ -&-\ -&-\ -&-\ -&-\ -&-\ -\\
\end{tabular}\\
\\
\begin{tabular}{r||r|r|r|r|r|r|r|r|r|r|}
&{\color{red}81}&{\color{red}82}&{\color{red}83}&{\color{red}84}&{\color{red}85}&{\color{red}86}&{\color{red}87}&{\color{red}88}&{\color{red}89}&{\color{red}90}\\
&{\color{red}$3^4$}&{\color{red}$2^141^1$}&{\color{red}$83^1$}&{\color{red}$2^23^17^1$}&{\color{red}$5^117^1$}&{\color{red}$2^143^1$}&{\color{red}$3^129^1$}&{\color{red}$2^311^1$}&{\color{red}$89^1$}&{\color{red}$2^13^25^1$}\\
{\color{green}2}&1 0&2 0&2 0&2 0&3 0&2 0&2 0&2 0&3 0&3 0\\
{\color{green}4}&5 0&5 0&2 0&2 0&7 0&5 0&4 0&4 0&4 0&5 0\\
{\color{green}6}&5 0&4 0&2 0&4 0&4 0&4 0&4 0&4 0&2 0&7 0\\
{\color{green}8}&5 0&4 0&2 0&4 0&4 0&4 0&4 0&4 0&2 0&11 0\\
{\color{green}10}&5 0&4 0&2 0&4 0&4 0&4 0&4 0&4 0&2 0&13 0\\
{\color{green}12}&5 0&4 0&2 0&5 0&4 0&4 0&4 0&4 0&2 0&14 0\\
{\color{green}14}&5 0&4 0&2 0&4 0&4 0&4 0&4 0&4 0&2 0&15 0\\
{\color{green}16}&5 0&4 0&2 0&4 0&4 0&4 0&4 0&4 0&2 0&16 0\\
{\color{green}18}&5 0&4 0&2 0&4 0&4 0&4 0&4 0&4 0&2 0&16 0\\
{\color{green}20}&5 0&4 0&2 0&4 0&4 0&4 0&4 0&4 0&2 0&16 0\\
{\color{green}22}&5 0&4 0&2 0&4 0&4 0&4 0&4 0&4 0&2 0&16 0\\
{\color{green}24}&5 0&4 0&2 0&4 0&4 0&4 0&4 0&4 0&2 0&16 0\\
{\color{green}26}&5 0&4 0&2 0&4 0&4 0&4 0&4 0&4 0&2 0&16 0\\
{\color{green}28}&5 0&4 0&2 0&4 0&4 0&4 0&4 0&4 0&2 0&16 0\\
{\color{green}30}&5 0&4 0&2 0&4 0&4 0&4 0&4 0&4 0&2 0&16 0\\
{\color{green}32}&5 0&4 0&2 0&4 0&4 0&4 0&4 0&4 0&2 0&16 0\\
{\color{green}34}&-\ -&-\ -&-\ -&-\ -&-\ -&-\ -&-\ -&-\ -&-\ -&-\ -\\
{\color{green}36}&-\ -&-\ -&-\ -&-\ -&-\ -&-\ -&-\ -&-\ -&-\ -&-\ -\\
{\color{green}38}&-\ -&-\ -&-\ -&-\ -&-\ -&-\ -&-\ -&-\ -&-\ -&-\ -\\
{\color{green}40}&-\ -&-\ -&-\ -&-\ -&-\ -&-\ -&-\ -&-\ -&-\ -&-\ -\\
\end{tabular}\\
\\
\begin{tabular}{r||r|r|r|r|r|r|r|r|r|r|}
&{\color{red}91}&{\color{red}92}&{\color{red}93}&{\color{red}94}&{\color{red}95}&{\color{red}96}&{\color{red}97}&{\color{red}98}&{\color{red}99}&{\color{red}100}\\
&{\color{red}$7^113^1$}&{\color{red}$2^223^1$}&{\color{red}$3^131^1$}&{\color{red}$2^147^1$}&{\color{red}$5^119^1$}&{\color{red}$2^53^1$}&{\color{red}$97^1$}&{\color{red}$2^17^2$}&{\color{red}$3^211^1$}&{\color{red}$2^25^2$}\\
{\color{green}2}&4 0&2 0&2 0&2 0&2 0&2 0&2 0&2 0&4 0&1 0\\
{\color{green}4}&4 0&2 0&6 0&4 0&7 0&6 0&2 0&8 0&7 0&4 0\\
{\color{green}6}&4 0&2 0&4 0&4 0&5 0&8 0&2 0&9 0&9 0&5 0\\
{\color{green}8}&5 0&2 0&4 0&4 0&4 0&8 0&2 0&12 0&9 0&5 0\\
{\color{green}10}&4 0&2 0&4 0&4 0&4 0&8 0&2 0&12 0&8 0&6 0\\
{\color{green}12}&4 0&2 0&4 0&4 0&4 0&10 0&2 0&13 0&8 0&6 0\\
{\color{green}14}&4 0&2 0&4 0&4 0&4 0&8 0&2 0&14 0&9 0&6 0\\
{\color{green}16}&4 0&2 0&4 0&4 0&4 0&8 0&2 0&13 0&8 0&6 0\\
{\color{green}18}&4 0&2 0&4 0&4 0&4 0&8 0&2 0&13 0&8 0&6 0\\
{\color{green}20}&4 0&2 0&4 0&4 0&4 0&8 0&2 0&14 0&8 0&6 0\\
{\color{green}22}&4 0&2 0&4 0&4 0&4 0&8 0&2 0&14 0&8 0&6 0\\
{\color{green}24}&4 0&2 0&4 0&4 0&4 0&8 0&2 0&13 0&8 0&6 0\\
{\color{green}26}&4 0&2 0&4 0&4 0&4 0&8 0&2 0&14 0&8 0&6 0\\
{\color{green}28}&4 0&2 0&4 0&4 0&4 0&8 0&2 0&14 0&8 0&6 0\\
{\color{green}30}&4 0&2 0&4 0&4 0&4 0&8 0&2 0&14 0&8 0&6 0\\
{\color{green}32}&4 0&2 0&4 0&4 0&4 0&8 0&2 0&14 0&-\ -&-\ -\\
{\color{green}34}&-\ -&-\ -&-\ -&-\ -&-\ -&-\ -&-\ -&-\ -&-\ -&-\ -\\
{\color{green}36}&-\ -&-\ -&-\ -&-\ -&-\ -&-\ -&-\ -&-\ -&-\ -&-\ -\\
{\color{green}38}&-\ -&-\ -&-\ -&-\ -&-\ -&-\ -&-\ -&-\ -&-\ -&-\ -\\
{\color{green}40}&-\ -&-\ -&-\ -&-\ -&-\ -&-\ -&-\ -&-\ -&-\ -&-\ -\\
\end{tabular}\\
\\
\begin{tabular}{r||r|r|r|r|r|r|r|r|r|r|}
&{\color{red}101}&{\color{red}102}&{\color{red}103}&{\color{red}104}&{\color{red}105}&{\color{red}106}&{\color{red}107}&{\color{red}108}&{\color{red}109}&{\color{red}110}\\
&{\color{red}$101^1$}&{\color{red}$2^13^117^1$}&{\color{red}$103^1$}&{\color{red}$2^313^1$}&{\color{red}$3^15^17^1$}&{\color{red}$2^153^1$}&{\color{red}$107^1$}&{\color{red}$2^23^3$}&{\color{red}$109^1$}&{\color{red}$2^15^111^1$}\\
{\color{green}2}&2 0&3 0&2 0&2 0&2 0&4 0&2 0&0 1&3 0&4 0\\
{\color{green}4}&2 0&6 0&2 0&5 0&7 0&4 0&2 0&2 2&2 0&9 0\\
{\color{green}6}&2 0&8 0&2 0&5 0&8 0&4 0&2 0&3 1&2 0&8 0\\
{\color{green}8}&2 0&9 0&2 0&4 0&9 0&4 0&2 0&4 1&2 0&8 0\\
{\color{green}10}&2 0&9 0&2 0&4 0&8 0&4 0&2 0&4 2&2 0&8 0\\
{\color{green}12}&2 0&8 0&2 0&4 0&8 0&4 0&2 0&4 1&2 0&8 0\\
{\color{green}14}&2 0&8 0&2 0&4 0&8 0&4 0&2 0&4 1&2 0&8 0\\
{\color{green}16}&2 0&8 0&2 0&4 0&8 0&4 0&2 0&4 2&2 0&8 0\\
{\color{green}18}&2 0&8 0&2 0&4 0&8 0&4 0&2 0&4 1&2 0&8 0\\
{\color{green}20}&2 0&8 0&2 0&4 0&8 0&4 0&2 0&4 1&2 0&8 0\\
{\color{green}22}&2 0&8 0&2 0&4 0&8 0&4 0&2 0&4 2&2 0&8 0\\
{\color{green}24}&2 0&8 0&2 0&4 0&8 0&4 0&2 0&4 1&2 0&8 0\\
{\color{green}26}&2 0&8 0&2 0&4 0&8 0&4 0&2 0&4 1&2 0&8 0\\
{\color{green}28}&2 0&8 0&2 0&4 0&8 0&4 0&2 0&4 2&2 0&8 0\\
{\color{green}30}&2 0&8 0&2 0&4 0&8 0&4 0&2 0&4 1&2 0&8 0\\
{\color{green}32}&2 0&-\ -&-\ -&-\ -&-\ -&-\ -&-\ -&-\ -&-\ -&-\ -\\
{\color{green}34}&-\ -&-\ -&-\ -&-\ -&-\ -&-\ -&-\ -&-\ -&-\ -&-\ -\\
{\color{green}36}&-\ -&-\ -&-\ -&-\ -&-\ -&-\ -&-\ -&-\ -&-\ -&-\ -\\
{\color{green}38}&-\ -&-\ -&-\ -&-\ -&-\ -&-\ -&-\ -&-\ -&-\ -&-\ -\\
{\color{green}40}&-\ -&-\ -&-\ -&-\ -&-\ -&-\ -&-\ -&-\ -&-\ -&-\ -\\
\end{tabular}\\
\\
\begin{tabular}{r||r|r|r|r|r|r|r|r|r|r|}
&{\color{red}111}&{\color{red}112}&{\color{red}113}&{\color{red}114}&{\color{red}115}&{\color{red}116}&{\color{red}117}&{\color{red}118}&{\color{red}119}&{\color{red}120}\\
&{\color{red}$3^137^1$}&{\color{red}$2^47^1$}&{\color{red}$113^1$}&{\color{red}$2^13^119^1$}&{\color{red}$5^123^1$}&{\color{red}$2^229^1$}&{\color{red}$3^213^1$}&{\color{red}$2^159^1$}&{\color{red}$7^117^1$}&{\color{red}$2^33^15^1$}\\
{\color{green}2}&2 0&3 0&4 0&3 0&3 0&3 0&3 0&4 0&2 0&2 0\\
{\color{green}4}&6 0&8 0&2 0&6 0&6 0&3 0&7 0&6 0&5 0&6 0\\
{\color{green}6}&4 0&11 0&2 0&9 0&5 0&4 0&8 0&4 0&4 0&8 0\\
{\color{green}8}&4 0&12 0&2 0&10 0&4 0&2 0&9 0&4 0&4 0&8 0\\
{\color{green}10}&4 0&11 0&2 0&8 0&4 0&2 0&8 0&4 0&4 0&8 0\\
{\color{green}12}&4 0&12 0&2 0&9 0&4 0&2 0&8 0&4 0&4 0&8 0\\
{\color{green}14}&4 0&12 0&2 0&8 0&4 0&2 0&8 0&4 0&4 0&8 0\\
{\color{green}16}&4 0&12 0&2 0&8 0&4 0&2 0&8 0&4 0&4 0&8 0\\
{\color{green}18}&4 0&12 0&2 0&8 0&4 0&2 0&8 0&4 0&4 0&8 0\\
{\color{green}20}&4 0&12 0&2 0&8 0&4 0&2 0&8 0&4 0&4 0&8 0\\
{\color{green}22}&4 0&12 0&2 0&8 0&4 0&2 0&8 0&4 0&4 0&8 0\\
{\color{green}24}&4 0&12 0&2 0&8 0&4 0&2 0&8 0&4 0&4 0&8 0\\
{\color{green}26}&4 0&12 0&2 0&8 0&4 0&2 0&8 0&4 0&4 0&8 0\\
{\color{green}28}&4 0&12 0&2 0&8 0&4 0&2 0&8 0&4 0&4 0&8 0\\
{\color{green}30}&4 0&12 0&2 0&8 0&4 0&2 0&8 0&4 0&4 0&8 0\\
{\color{green}32}&-\ -&-\ -&-\ -&-\ -&-\ -&-\ -&-\ -&-\ -&-\ -&-\ -\\
{\color{green}34}&-\ -&-\ -&-\ -&-\ -&-\ -&-\ -&-\ -&-\ -&-\ -&-\ -\\
{\color{green}36}&-\ -&-\ -&-\ -&-\ -&-\ -&-\ -&-\ -&-\ -&-\ -&-\ -\\
{\color{green}38}&-\ -&-\ -&-\ -&-\ -&-\ -&-\ -&-\ -&-\ -&-\ -&-\ -\\
{\color{green}40}&-\ -&-\ -&-\ -&-\ -&-\ -&-\ -&-\ -&-\ -&-\ -&-\ -\\
\end{tabular}\\
\\
\begin{tabular}{r||r|r|r|r|r|r|r|r|r|r|}
&{\color{red}121}&{\color{red}122}&{\color{red}123}&{\color{red}124}&{\color{red}125}&{\color{red}126}&{\color{red}127}&{\color{red}128}&{\color{red}129}&{\color{red}130}\\
&{\color{red}$11^2$}&{\color{red}$2^161^1$}&{\color{red}$3^141^1$}&{\color{red}$2^231^1$}&{\color{red}$5^3$}&{\color{red}$2^13^27^1$}&{\color{red}$127^1$}&{\color{red}$2^7$}&{\color{red}$3^143^1$}&{\color{red}$2^15^113^1$}\\
{\color{green}2}&3 1&3 0&4 0&2 0&3 0&2 0&2 0&4 0&4 0&3 0\\
{\color{green}4}&7 1&4 0&4 0&2 0&4 0&8 0&3 0&8 0&6 0&7 0\\
{\color{green}6}&8 1&5 0&4 0&2 0&4 0&12 0&2 0&12 0&4 0&8 0\\
{\color{green}8}&8 1&4 0&4 0&2 0&4 0&13 0&2 0&8 0&4 0&10 0\\
{\color{green}10}&8 1&4 0&4 0&2 0&4 0&15 0&2 0&12 0&4 0&8 0\\
{\color{green}12}&9 1&4 0&4 0&2 0&4 0&16 0&2 0&8 0&4 0&8 0\\
{\color{green}14}&8 1&4 0&4 0&2 0&4 0&17 0&2 0&8 0&4 0&8 0\\
{\color{green}16}&9 1&4 0&4 0&2 0&4 0&17 0&2 0&8 0&4 0&8 0\\
{\color{green}18}&9 1&4 0&4 0&2 0&4 0&16 0&2 0&8 0&4 0&8 0\\
{\color{green}20}&9 1&4 0&4 0&2 0&4 0&16 0&2 0&8 0&4 0&8 0\\
{\color{green}22}&9 1&4 0&4 0&2 0&4 0&16 0&2 0&8 0&4 0&8 0\\
{\color{green}24}&9 1&4 0&4 0&2 0&4 0&16 0&2 0&8 0&4 0&8 0\\
{\color{green}26}&9 1&4 0&4 0&2 0&4 0&16 0&2 0&8 0&4 0&8 0\\
{\color{green}28}&9 1&4 0&4 0&2 0&4 0&16 0&2 0&8 0&4 0&8 0\\
{\color{green}30}&9 1&4 0&4 0&2 0&4 0&16 0&2 0&8 0&4 0&8 0\\
{\color{green}32}&-\ -&-\ -&-\ -&-\ -&-\ -&-\ -&-\ -&-\ -&-\ -&-\ -\\
{\color{green}34}&-\ -&-\ -&-\ -&-\ -&-\ -&-\ -&-\ -&-\ -&-\ -&-\ -\\
{\color{green}36}&-\ -&-\ -&-\ -&-\ -&-\ -&-\ -&-\ -&-\ -&-\ -&-\ -\\
{\color{green}38}&-\ -&-\ -&-\ -&-\ -&-\ -&-\ -&-\ -&-\ -&-\ -&-\ -\\
{\color{green}40}&-\ -&-\ -&-\ -&-\ -&-\ -&-\ -&-\ -&-\ -&-\ -&-\ -\\
\end{tabular}\\
\\
\begin{tabular}{r||r|r|r|r|r|r|r|r|r|r|}
&{\color{red}131}&{\color{red}132}&{\color{red}133}&{\color{red}134}&{\color{red}135}&{\color{red}136}&{\color{red}137}&{\color{red}138}&{\color{red}139}&{\color{red}140}\\
&{\color{red}$131^1$}&{\color{red}$2^23^111^1$}&{\color{red}$7^119^1$}&{\color{red}$2^167^1$}&{\color{red}$3^35^1$}&{\color{red}$2^317^1$}&{\color{red}$137^1$}&{\color{red}$2^13^123^1$}&{\color{red}$139^1$}&{\color{red}$2^25^17^1$}\\
{\color{green}2}&2 0&2 0&4 0&2 0&4 0&3 0&2 0&4 0&3 0&2 0\\
{\color{green}4}&2 0&4 0&5 0&5 0&8 0&4 0&2 0&6 0&3 0&6 0\\
{\color{green}6}&2 0&6 0&4 0&4 0&10 0&6 0&2 0&9 0&2 0&4 0\\
{\color{green}8}&2 0&4 0&4 0&4 0&8 0&4 0&2 0&8 0&2 0&4 0\\
{\color{green}10}&2 0&4 0&4 0&4 0&8 0&4 0&2 0&8 0&2 0&5 0\\
{\color{green}12}&2 0&4 0&4 0&4 0&8 0&4 0&2 0&8 0&2 0&4 0\\
{\color{green}14}&2 0&4 0&4 0&4 0&8 0&4 0&2 0&8 0&2 0&4 0\\
{\color{green}16}&2 0&4 0&4 0&4 0&8 0&4 0&2 0&8 0&2 0&4 0\\
{\color{green}18}&2 0&4 0&4 0&4 0&8 0&4 0&2 0&8 0&2 0&4 0\\
{\color{green}20}&2 0&4 0&4 0&4 0&8 0&4 0&2 0&8 0&2 0&4 0\\
{\color{green}22}&2 0&4 0&4 0&4 0&8 0&4 0&2 0&8 0&2 0&4 0\\
{\color{green}24}&2 0&4 0&4 0&4 0&8 0&4 0&2 0&8 0&2 0&4 0\\
{\color{green}26}&2 0&4 0&4 0&4 0&8 0&4 0&2 0&8 0&2 0&4 0\\
{\color{green}28}&2 0&4 0&4 0&4 0&8 0&4 0&2 0&8 0&2 0&4 0\\
{\color{green}30}&2 0&4 0&4 0&4 0&8 0&4 0&2 0&8 0&2 0&4 0\\
{\color{green}32}&-\ -&-\ -&-\ -&-\ -&-\ -&-\ -&-\ -&-\ -&-\ -&-\ -\\
{\color{green}34}&-\ -&-\ -&-\ -&-\ -&-\ -&-\ -&-\ -&-\ -&-\ -&-\ -\\
{\color{green}36}&-\ -&-\ -&-\ -&-\ -&-\ -&-\ -&-\ -&-\ -&-\ -&-\ -\\
{\color{green}38}&-\ -&-\ -&-\ -&-\ -&-\ -&-\ -&-\ -&-\ -&-\ -&-\ -\\
{\color{green}40}&-\ -&-\ -&-\ -&-\ -&-\ -&-\ -&-\ -&-\ -&-\ -&-\ -\\
\end{tabular}\\
\\
\begin{tabular}{r||r|r|r|r|r|r|r|r|r|r|}
&{\color{red}141}&{\color{red}142}&{\color{red}143}&{\color{red}144}&{\color{red}145}&{\color{red}146}&{\color{red}147}&{\color{red}148}&{\color{red}149}&{\color{red}150}\\
&{\color{red}$3^147^1$}&{\color{red}$2^171^1$}&{\color{red}$11^113^1$}&{\color{red}$2^43^2$}&{\color{red}$5^129^1$}&{\color{red}$2^173^1$}&{\color{red}$3^17^2$}&{\color{red}$2^237^1$}&{\color{red}$149^1$}&{\color{red}$2^13^15^2$}\\
{\color{green}2}&6 0&5 0&3 0&1 1&4 0&2 0&5 0&2 0&2 0&3 0\\
{\color{green}4}&5 0&4 0&4 0&6 1&5 0&5 0&13 0&2 0&2 0&9 0\\
{\color{green}6}&5 0&4 0&4 0&11 1&4 0&4 0&15 0&2 0&2 0&15 0\\
{\color{green}8}&4 0&4 0&4 0&13 1&4 0&4 0&13 0&2 0&2 0&19 0\\
{\color{green}10}&4 0&4 0&4 0&17 1&4 0&4 0&14 0&2 0&2 0&19 0\\
{\color{green}12}&4 0&4 0&4 0&19 1&4 0&4 0&14 0&2 0&2 0&21 0\\
{\color{green}14}&4 0&4 0&4 0&20 1&4 0&4 0&14 0&2 0&2 0&21 0\\
{\color{green}16}&4 0&4 0&4 0&23 1&4 0&4 0&14 0&2 0&2 0&25 0\\
{\color{green}18}&4 0&4 0&4 0&22 1&4 0&4 0&14 0&2 0&2 0&23 0\\
{\color{green}20}&4 0&4 0&4 0&23 1&4 0&4 0&14 0&2 0&2 0&23 0\\
{\color{green}22}&4 0&4 0&4 0&24 1&4 0&4 0&15 0&2 0&2 0&23 0\\
{\color{green}24}&4 0&4 0&4 0&23 1&4 0&4 0&14 0&2 0&2 0&24 0\\
{\color{green}26}&4 0&4 0&4 0&23 1&4 0&4 0&14 0&2 0&2 0&23 0\\
{\color{green}28}&4 0&4 0&4 0&24 1&4 0&4 0&14 0&2 0&2 0&24 0\\
{\color{green}30}&4 0&4 0&4 0&24 1&4 0&4 0&14 0&2 0&2 0&24 0\\
{\color{green}32}&-\ -&-\ -&-\ -&-\ -&-\ -&-\ -&14 0&-\ -&-\ -&24 0\\
{\color{green}34}&-\ -&-\ -&-\ -&-\ -&-\ -&-\ -&14 0&-\ -&-\ -&24 0\\
{\color{green}36}&-\ -&-\ -&-\ -&-\ -&-\ -&-\ -&14 0&-\ -&-\ -&-\ -\\
{\color{green}38}&-\ -&-\ -&-\ -&-\ -&-\ -&-\ -&-\ -&-\ -&-\ -&-\ -\\
{\color{green}40}&-\ -&-\ -&-\ -&-\ -&-\ -&-\ -&-\ -&-\ -&-\ -&-\ -\\
\end{tabular}\\
\\
\begin{tabular}{r||r|r|r|r|r|r|r|r|r|r|}
&{\color{red}151}&{\color{red}152}&{\color{red}153}&{\color{red}154}&{\color{red}155}&{\color{red}156}&{\color{red}157}&{\color{red}158}&{\color{red}159}&{\color{red}160}\\
&{\color{red}$151^1$}&{\color{red}$2^319^1$}&{\color{red}$3^217^1$}&{\color{red}$2^17^111^1$}&{\color{red}$5^131^1$}&{\color{red}$2^23^113^1$}&{\color{red}$157^1$}&{\color{red}$2^179^1$}&{\color{red}$3^153^1$}&{\color{red}$2^55^1$}\\
{\color{green}2}&3 0&3 0&5 0&4 0&5 0&2 0&2 0&6 0&2 0&3 0\\
{\color{green}4}&2 0&4 0&9 0&9 0&5 0&4 0&2 0&5 0&6 0&7 0\\
{\color{green}6}&2 0&5 0&9 0&10 0&4 0&4 0&2 0&5 0&5 0&8 0\\
{\color{green}8}&2 0&4 0&10 0&8 0&4 0&5 0&2 0&4 0&4 0&10 0\\
{\color{green}10}&2 0&4 0&8 0&8 0&4 0&4 0&2 0&4 0&4 0&8 0\\
{\color{green}12}&2 0&4 0&8 0&8 0&4 0&4 0&2 0&4 0&4 0&8 0\\
{\color{green}14}&2 0&4 0&8 0&8 0&4 0&4 0&2 0&4 0&4 0&8 0\\
{\color{green}16}&2 0&4 0&8 0&8 0&4 0&4 0&2 0&4 0&4 0&8 0\\
{\color{green}18}&2 0&4 0&8 0&8 0&4 0&4 0&2 0&4 0&4 0&8 0\\
{\color{green}20}&2 0&4 0&8 0&8 0&4 0&4 0&2 0&4 0&4 0&8 0\\
{\color{green}22}&2 0&4 0&8 0&8 0&4 0&4 0&2 0&4 0&4 0&8 0\\
{\color{green}24}&2 0&4 0&8 0&8 0&4 0&4 0&2 0&4 0&4 0&8 0\\
{\color{green}26}&2 0&4 0&8 0&8 0&4 0&4 0&2 0&4 0&4 0&8 0\\
{\color{green}28}&2 0&4 0&8 0&8 0&4 0&4 0&2 0&4 0&4 0&8 0\\
{\color{green}30}&2 0&4 0&8 0&8 0&4 0&4 0&2 0&4 0&4 0&8 0\\
{\color{green}32}&-\ -&-\ -&-\ -&-\ -&-\ -&-\ -&-\ -&-\ -&-\ -&-\ -\\
{\color{green}34}&-\ -&-\ -&-\ -&-\ -&-\ -&-\ -&-\ -&-\ -&-\ -&-\ -\\
{\color{green}36}&-\ -&-\ -&-\ -&-\ -&-\ -&-\ -&-\ -&-\ -&-\ -&-\ -\\
{\color{green}38}&-\ -&-\ -&-\ -&-\ -&-\ -&-\ -&-\ -&-\ -&-\ -&-\ -\\
{\color{green}40}&-\ -&-\ -&-\ -&-\ -&-\ -&-\ -&-\ -&-\ -&-\ -&-\ -\\
\end{tabular}\\
\\
\begin{tabular}{r||r|r|r|r|r|r|r|r|r|r|}
&{\color{red}161}&{\color{red}162}&{\color{red}163}&{\color{red}164}&{\color{red}165}&{\color{red}166}&{\color{red}167}&{\color{red}168}&{\color{red}169}&{\color{red}170}\\
&{\color{red}$7^123^1$}&{\color{red}$2^13^4$}&{\color{red}$163^1$}&{\color{red}$2^241^1$}&{\color{red}$3^15^111^1$}&{\color{red}$2^183^1$}&{\color{red}$167^1$}&{\color{red}$2^33^17^1$}&{\color{red}$13^2$}&{\color{red}$2^15^117^1$}\\
{\color{green}2}&4 0&4 0&3 0&1 0&3 0&3 0&2 0&2 0&3 0&6 0\\
{\color{green}4}&4 0&8 0&2 0&2 0&8 0&5 0&2 0&8 0&12 0&8 0\\
{\color{green}6}&4 0&10 0&2 0&2 0&8 0&4 0&2 0&10 0&8 0&8 0\\
{\color{green}8}&4 0&10 0&2 0&2 0&9 0&4 0&2 0&9 0&9 0&8 0\\
{\color{green}10}&4 0&10 0&2 0&2 0&8 0&4 0&2 0&8 0&8 0&9 0\\
{\color{green}12}&4 0&10 0&2 0&2 0&8 0&4 0&2 0&8 0&9 0&8 0\\
{\color{green}14}&4 0&10 0&2 0&2 0&8 0&4 0&2 0&8 0&8 0&8 0\\
{\color{green}16}&4 0&10 0&2 0&2 0&8 0&4 0&2 0&8 0&9 0&8 0\\
{\color{green}18}&4 0&10 0&2 0&2 0&8 0&4 0&2 0&8 0&9 0&8 0\\
{\color{green}20}&4 0&10 0&2 0&2 0&8 0&4 0&2 0&8 0&9 0&8 0\\
{\color{green}22}&4 0&10 0&2 0&2 0&8 0&4 0&2 0&8 0&9 0&8 0\\
{\color{green}24}&4 0&10 0&2 0&2 0&8 0&4 0&2 0&8 0&9 0&8 0\\
{\color{green}26}&4 0&10 0&2 0&2 0&8 0&4 0&2 0&8 0&9 0&8 0\\
{\color{green}28}&4 0&10 0&2 0&2 0&8 0&4 0&2 0&8 0&9 0&8 0\\
{\color{green}30}&4 0&10 0&2 0&2 0&8 0&4 0&2 0&8 0&9 0&8 0\\
{\color{green}32}&-\ -&-\ -&-\ -&-\ -&-\ -&-\ -&-\ -&-\ -&-\ -&-\ -\\
{\color{green}34}&-\ -&-\ -&-\ -&-\ -&-\ -&-\ -&-\ -&-\ -&-\ -&-\ -\\
{\color{green}36}&-\ -&-\ -&-\ -&-\ -&-\ -&-\ -&-\ -&-\ -&-\ -&-\ -\\
{\color{green}38}&-\ -&-\ -&-\ -&-\ -&-\ -&-\ -&-\ -&-\ -&-\ -&-\ -\\
{\color{green}40}&-\ -&-\ -&-\ -&-\ -&-\ -&-\ -&-\ -&-\ -&-\ -&-\ -\\
\end{tabular}\\
\\
\begin{tabular}{r||r|r|r|r|r|r|r|r|r|r|}
&{\color{red}171}&{\color{red}172}&{\color{red}173}&{\color{red}174}&{\color{red}175}&{\color{red}176}&{\color{red}177}&{\color{red}178}&{\color{red}179}&{\color{red}180}\\
&{\color{red}$3^219^1$}&{\color{red}$2^243^1$}&{\color{red}$173^1$}&{\color{red}$2^13^129^1$}&{\color{red}$5^27^1$}&{\color{red}$2^411^1$}&{\color{red}$3^159^1$}&{\color{red}$2^189^1$}&{\color{red}$179^1$}&{\color{red}$2^23^25^1$}\\
{\color{green}2}&5 0&2 0&2 0&5 0&6 0&4 0&4 0&4 0&3 0&1 0\\
{\color{green}4}&9 0&2 0&2 0&9 0&10 0&10 0&4 0&4 0&2 0&5 0\\
{\color{green}6}&12 0&2 0&2 0&8 0&12 0&12 0&4 0&4 0&2 0&7 0\\
{\color{green}8}&10 0&2 0&2 0&8 0&12 0&12 0&4 0&4 0&2 0&8 0\\
{\color{green}10}&8 0&2 0&2 0&8 0&13 0&13 0&4 0&4 0&2 0&8 0\\
{\color{green}12}&8 0&2 0&2 0&8 0&12 0&12 0&4 0&4 0&2 0&8 0\\
{\color{green}14}&8 0&2 0&2 0&8 0&12 0&12 0&4 0&4 0&2 0&9 0\\
{\color{green}16}&8 0&2 0&2 0&8 0&12 0&12 0&4 0&4 0&2 0&8 0\\
{\color{green}18}&8 0&2 0&2 0&8 0&12 0&12 0&4 0&4 0&2 0&8 0\\
{\color{green}20}&8 0&2 0&2 0&8 0&12 0&13 0&4 0&4 0&2 0&8 0\\
{\color{green}22}&8 0&2 0&2 0&8 0&12 0&12 0&4 0&4 0&2 0&8 0\\
{\color{green}24}&8 0&2 0&2 0&8 0&12 0&12 0&4 0&4 0&2 0&8 0\\
{\color{green}26}&8 0&2 0&2 0&8 0&12 0&12 0&4 0&4 0&2 0&8 0\\
{\color{green}28}&8 0&2 0&2 0&8 0&12 0&12 0&4 0&4 0&2 0&8 0\\
{\color{green}30}&8 0&2 0&2 0&8 0&12 0&12 0&4 0&4 0&2 0&-\ -\\
{\color{green}32}&-\ -&-\ -&-\ -&-\ -&-\ -&-\ -&-\ -&-\ -&-\ -&-\ -\\
{\color{green}34}&-\ -&-\ -&-\ -&-\ -&-\ -&-\ -&-\ -&-\ -&-\ -&-\ -\\
{\color{green}36}&-\ -&-\ -&-\ -&-\ -&-\ -&-\ -&-\ -&-\ -&-\ -&-\ -\\
{\color{green}38}&-\ -&-\ -&-\ -&-\ -&-\ -&-\ -&-\ -&-\ -&-\ -&-\ -\\
{\color{green}40}&-\ -&-\ -&-\ -&-\ -&-\ -&-\ -&-\ -&-\ -&-\ -&-\ -\\
\end{tabular}\\
\\
\begin{tabular}{r||r|r|r|r|r|r|r|r|r|r|}
&{\color{red}181}&{\color{red}182}&{\color{red}183}&{\color{red}184}&{\color{red}185}&{\color{red}186}&{\color{red}187}&{\color{red}188}&{\color{red}189}&{\color{red}190}\\
&{\color{red}$181^1$}&{\color{red}$2^17^113^1$}&{\color{red}$3^161^1$}&{\color{red}$2^323^1$}&{\color{red}$5^137^1$}&{\color{red}$2^13^131^1$}&{\color{red}$11^117^1$}&{\color{red}$2^247^1$}&{\color{red}$3^37^1$}&{\color{red}$2^15^119^1$}\\
{\color{green}2}&2 0&5 0&3 0&5 0&5 0&4 0&6 0&2 0&6 0&4 0\\
{\color{green}4}&3 0&10 0&5 0&6 0&4 0&10 0&6 0&2 0&12 0&9 0\\
{\color{green}6}&2 0&10 0&4 0&4 0&4 0&8 0&4 0&2 0&14 0&9 0\\
{\color{green}8}&2 0&8 0&4 0&4 0&4 0&11 0&4 0&2 0&8 0&8 0\\
{\color{green}10}&2 0&9 0&4 0&4 0&4 0&8 0&4 0&2 0&8 0&8 0\\
{\color{green}12}&2 0&8 0&4 0&4 0&4 0&8 0&4 0&2 0&8 0&8 0\\
{\color{green}14}&2 0&8 0&4 0&4 0&4 0&8 0&4 0&2 0&8 0&8 0\\
{\color{green}16}&2 0&8 0&4 0&4 0&4 0&8 0&4 0&2 0&8 0&8 0\\
{\color{green}18}&2 0&8 0&4 0&4 0&4 0&8 0&4 0&2 0&8 0&8 0\\
{\color{green}20}&2 0&8 0&4 0&4 0&4 0&8 0&4 0&2 0&8 0&8 0\\
{\color{green}22}&2 0&8 0&4 0&4 0&4 0&8 0&4 0&2 0&8 0&8 0\\
{\color{green}24}&2 0&8 0&4 0&4 0&4 0&8 0&4 0&2 0&8 0&8 0\\
{\color{green}26}&2 0&8 0&4 0&4 0&4 0&8 0&4 0&2 0&8 0&8 0\\
{\color{green}28}&2 0&8 0&4 0&4 0&4 0&8 0&4 0&2 0&8 0&8 0\\
{\color{green}30}&2 0&8 0&4 0&4 0&4 0&-\ -&4 0&2 0&8 0&8 0\\
{\color{green}32}&-\ -&-\ -&-\ -&-\ -&-\ -&-\ -&-\ -&-\ -&-\ -&-\ -\\
{\color{green}34}&-\ -&-\ -&-\ -&-\ -&-\ -&-\ -&-\ -&-\ -&-\ -&-\ -\\
{\color{green}36}&-\ -&-\ -&-\ -&-\ -&-\ -&-\ -&-\ -&-\ -&-\ -&-\ -\\
{\color{green}38}&-\ -&-\ -&-\ -&-\ -&-\ -&-\ -&-\ -&-\ -&-\ -&-\ -\\
{\color{green}40}&-\ -&-\ -&-\ -&-\ -&-\ -&-\ -&-\ -&-\ -&-\ -&-\ -\\
\end{tabular}\\
\\
\begin{tabular}{r||r|r|r|r|r|r|r|r|r|r|}
&{\color{red}191}&{\color{red}192}&{\color{red}193}&{\color{red}194}&{\color{red}195}&{\color{red}196}&{\color{red}197}&{\color{red}198}&{\color{red}199}&{\color{red}200}\\
&{\color{red}$191^1$}&{\color{red}$2^63^1$}&{\color{red}$193^1$}&{\color{red}$2^197^1$}&{\color{red}$3^15^113^1$}&{\color{red}$2^27^2$}&{\color{red}$197^1$}&{\color{red}$2^13^211^1$}&{\color{red}$199^1$}&{\color{red}$2^35^2$}\\
{\color{green}2}&2 0&4 0&3 0&3 0&5 0&3 0&3 0&5 0&3 0&5 0\\
{\color{green}4}&2 0&12 0&2 0&4 0&10 0&7 0&2 0&10 0&2 0&12 0\\
{\color{green}6}&2 0&18 0&2 0&4 0&9 0&11 0&2 0&14 0&2 0&11 0\\
{\color{green}8}&2 0&22 0&2 0&4 0&9 0&6 0&2 0&15 0&2 0&18 0\\
{\color{green}10}&2 0&24 0&2 0&4 0&8 0&7 0&2 0&19 0&2 0&12 0\\
{\color{green}12}&2 0&30 0&2 0&4 0&8 0&7 0&2 0&16 0&2 0&13 0\\
{\color{green}14}&2 0&26 0&2 0&4 0&8 0&7 0&2 0&16 0&2 0&12 0\\
{\color{green}16}&2 0&30 0&2 0&4 0&8 0&7 0&2 0&16 0&2 0&13 0\\
{\color{green}18}&2 0&30 0&2 0&4 0&8 0&7 0&2 0&16 0&2 0&12 0\\
{\color{green}20}&2 0&30 0&2 0&4 0&8 0&7 0&2 0&17 0&2 0&12 0\\
{\color{green}22}&2 0&32 0&2 0&4 0&8 0&7 0&2 0&16 0&2 0&12 0\\
{\color{green}24}&2 0&32 0&2 0&4 0&8 0&7 0&2 0&16 0&2 0&12 0\\
{\color{green}26}&2 0&30 0&2 0&4 0&8 0&7 0&2 0&16 0&2 0&12 0\\
{\color{green}28}&2 0&32 0&2 0&4 0&8 0&7 0&2 0&16 0&2 0&12 0\\
{\color{green}30}&2 0&-\ -&2 0&4 0&-\ -&-\ -&2 0&-\ -&2 0&-\ -\\
{\color{green}32}&-\ -&32 0&-\ -&-\ -&-\ -&-\ -&-\ -&-\ -&-\ -&-\ -\\
{\color{green}34}&-\ -&32 0&-\ -&-\ -&-\ -&-\ -&-\ -&-\ -&-\ -&-\ -\\
{\color{green}36}&-\ -&-\ -&-\ -&-\ -&-\ -&-\ -&-\ -&-\ -&-\ -&-\ -\\
{\color{green}38}&-\ -&-\ -&-\ -&-\ -&-\ -&-\ -&-\ -&-\ -&-\ -&-\ -\\
{\color{green}40}&-\ -&-\ -&-\ -&-\ -&-\ -&-\ -&-\ -&-\ -&-\ -&-\ -\\
\end{tabular}\\
\\
\begin{tabular}{r||r|r|r|r|r|r|r|r|r|r|}
&{\color{red}210}&{\color{red}216}&{\color{red}224}&{\color{red}240}&{\color{red}243}&{\color{red}245}&{\color{red}256}&{\color{red}289}&{\color{red}320}&{\color{red}343}\\
&{\color{red}$2^13^15^17^1$}&{\color{red}$2^33^3$}&{\color{red}$2^57^1$}&{\color{red}$2^43^15^1$}&{\color{red}$3^5$}&{\color{red}$5^17^2$}&{\color{red}$2^8$}&{\color{red}$17^2$}&{\color{red}$2^65^1$}&{\color{red}$7^3$}\\
{\color{green}2}&5 0&4 0&4 0&4 0&4 2&8 0&0 5&6 0&7 0&3 2\\
{\color{green}4}&11 0&8 0&8 0&12 0&9 2&16 0&9 5&9 0&19 0&4 2\\
{\color{green}6}&15 0&10 0&10 0&17 0&8 2&13 0&10 5&10 0&26 0&4 2\\
{\color{green}8}&16 0&8 0&8 0&21 0&8 2&14 0&13 5&10 0&28 0&4 2\\
{\color{green}10}&17 0&-\ -&-\ -&-\ -&-\ -&-\ -&-\ -&-\ -&-\ -&-\ -\\
{\color{green}12}&-\ -&-\ -&-\ -&-\ -&-\ -&-\ -&-\ -&-\ -&-\ -&-\ -\\
{\color{green}14}&-\ -&-\ -&-\ -&-\ -&-\ -&-\ -&-\ -&-\ -&-\ -&-\ -\\
{\color{green}16}&-\ -&-\ -&-\ -&-\ -&-\ -&-\ -&-\ -&-\ -&-\ -&-\ -\\
{\color{green}18}&-\ -&-\ -&-\ -&-\ -&-\ -&-\ -&-\ -&-\ -&-\ -&-\ -\\
{\color{green}20}&-\ -&-\ -&-\ -&-\ -&-\ -&-\ -&-\ -&-\ -&-\ -&-\ -\\
{\color{green}22}&-\ -&-\ -&-\ -&-\ -&-\ -&-\ -&-\ -&-\ -&-\ -&-\ -\\
{\color{green}24}&-\ -&-\ -&-\ -&-\ -&-\ -&-\ -&-\ -&-\ -&-\ -&-\ -\\
{\color{green}26}&-\ -&-\ -&-\ -&-\ -&-\ -&-\ -&-\ -&-\ -&-\ -&-\ -\\
{\color{green}28}&-\ -&-\ -&-\ -&-\ -&-\ -&-\ -&-\ -&-\ -&-\ -&-\ -\\
{\color{green}30}&-\ -&-\ -&-\ -&-\ -&-\ -&-\ -&-\ -&-\ -&-\ -&-\ -\\
{\color{green}32}&-\ -&-\ -&-\ -&-\ -&-\ -&-\ -&-\ -&-\ -&-\ -&-\ -\\
{\color{green}34}&-\ -&-\ -&-\ -&-\ -&-\ -&-\ -&-\ -&-\ -&-\ -&-\ -\\
{\color{green}36}&-\ -&-\ -&-\ -&-\ -&-\ -&-\ -&-\ -&-\ -&-\ -&-\ -\\
{\color{green}38}&-\ -&-\ -&-\ -&-\ -&-\ -&-\ -&-\ -&-\ -&-\ -&-\ -\\
{\color{green}40}&-\ -&-\ -&-\ -&-\ -&-\ -&-\ -&-\ -&-\ -&-\ -&-\ -\\
\end{tabular}\\
\\
\begin{tabular}{r||r|r|r|r|r|r|r|r|r|r|}
&{\color{red}361}&{\color{red}441}&{\color{red}512}&{\color{red}529}&{\color{red}625}&{\color{red}672}&{\color{red}729}&{\color{red}841}&{\color{red}961}&{\color{red}1028}\\
&{\color{red}$19^2$}&{\color{red}$3^27^2$}&{\color{red}$2^9$}&{\color{red}$23^2$}&{\color{red}$5^4$}&{\color{red}$2^53^17^1$}&{\color{red}$3^6$}&{\color{red}$29^2$}&{\color{red}$31^2$}&{\color{red}$2^2257^1$}\\
{\color{green}2}&8 1&6 4&5 2&9 1&7 0&10 0&5 0&11 0&11 1&5 0\\
{\color{green}4}&14 1&21 3&13 2&12 2&7 0&18 0&5 0&12 0&12 2&2 0\\
{\color{green}6}&12 1&27 4&10 2&10 1&7 0&18 0&5 0&12 0&12 1&2 0\\
{\color{green}8}&10 1&26 4&9 2&10 1&7 0&16 0&5 0&12 0&12 1&-\ -\\
{\color{green}10}&10 1&26 3&-\ -&10 2&-\ -&-\ -&5 0&-\ -&12 2&-\ -\\
{\color{green}12}&-\ -&28 4&-\ -&11 1&-\ -&-\ -&-\ -&-\ -&-\ -&-\ -\\
{\color{green}14}&-\ -&27 4&-\ -&10 1&-\ -&-\ -&-\ -&-\ -&-\ -&-\ -\\
{\color{green}16}&-\ -&-\ -&-\ -&11 2&-\ -&-\ -&-\ -&-\ -&-\ -&-\ -\\
{\color{green}18}&-\ -&-\ -&-\ -&-\ -&-\ -&-\ -&-\ -&-\ -&-\ -&-\ -\\
{\color{green}20}&-\ -&-\ -&-\ -&-\ -&-\ -&-\ -&-\ -&-\ -&-\ -&-\ -\\
{\color{green}22}&-\ -&-\ -&-\ -&-\ -&-\ -&-\ -&-\ -&-\ -&-\ -&-\ -\\
{\color{green}24}&-\ -&-\ -&-\ -&-\ -&-\ -&-\ -&-\ -&-\ -&-\ -&-\ -\\
{\color{green}26}&-\ -&-\ -&-\ -&-\ -&-\ -&-\ -&-\ -&-\ -&-\ -&-\ -\\
{\color{green}28}&-\ -&-\ -&-\ -&-\ -&-\ -&-\ -&-\ -&-\ -&-\ -&-\ -\\
{\color{green}30}&-\ -&-\ -&-\ -&-\ -&-\ -&-\ -&-\ -&-\ -&-\ -&-\ -\\
{\color{green}32}&-\ -&-\ -&-\ -&-\ -&-\ -&-\ -&-\ -&-\ -&-\ -&-\ -\\
{\color{green}34}&-\ -&-\ -&-\ -&-\ -&-\ -&-\ -&-\ -&-\ -&-\ -&-\ -\\
{\color{green}36}&-\ -&-\ -&-\ -&-\ -&-\ -&-\ -&-\ -&-\ -&-\ -&-\ -\\
{\color{green}38}&-\ -&-\ -&-\ -&-\ -&-\ -&-\ -&-\ -&-\ -&-\ -&-\ -\\
{\color{green}40}&-\ -&-\ -&-\ -&-\ -&-\ -&-\ -&-\ -&-\ -&-\ -&-\ -\\
\end{tabular}\\
\\
\begin{tabular}{r||r|r|r|r|r|r|r|r|r|r|}
&{\color{red}1369}&{\color{red}1681}&{\color{red}1225}&{\color{red}1331}&{\color{red}2187}&{\color{red}2197}&{\color{red}2209}&{\color{red}3125}&{\color{red}2401}&{\color{red}6561}\\
&{\color{red}$37^2$}&{\color{red}$41^2$}&{\color{red}$5^27^2$}&{\color{red}$11^3$}&{\color{red}$3^7$}&{\color{red}$13^3$}&{\color{red}$47^2$}&{\color{red}$5^5$}&{\color{red}$7^4$}&{\color{red}$3^8$}\\
{\color{green}2}&15 0&13 0&25 4&4 2&10 2&4 0&12 1&4 0&10 0&5 0\\
{\color{green}4}&12 0&15 0&42 4&4 2&8 2&4 0&14 1&4 0&8 0&-\ -\\
{\color{green}6}&11 0&-\ -&-\ -&-\ -&8 2&-\ -&12 2&-\ -&-\ -&-\ -\\
{\color{green}8}&-\ -&-\ -&-\ -&-\ -&-\ -&-\ -&-\ -&-\ -&-\ -&-\ -\\
{\color{green}10}&-\ -&-\ -&-\ -&-\ -&-\ -&-\ -&-\ -&-\ -&-\ -&-\ -\\
{\color{green}12}&-\ -&-\ -&-\ -&-\ -&-\ -&-\ -&-\ -&-\ -&-\ -&-\ -\\
{\color{green}14}&-\ -&-\ -&-\ -&-\ -&-\ -&-\ -&-\ -&-\ -&-\ -&-\ -\\
{\color{green}16}&-\ -&-\ -&-\ -&-\ -&-\ -&-\ -&-\ -&-\ -&-\ -&-\ -\\
{\color{green}18}&-\ -&-\ -&-\ -&-\ -&-\ -&-\ -&-\ -&-\ -&-\ -&-\ -\\
{\color{green}20}&-\ -&-\ -&-\ -&-\ -&-\ -&-\ -&-\ -&-\ -&-\ -&-\ -\\
\end{tabular}\\
\\
\begin{tabular}{r||r|r|r|r|}
&{\color{red}14641}&{\color{red}15625}&{\color{red}16807}&{\color{red}19683}\\
&{\color{red}$11^4$}&{\color{red}$5^6$}&{\color{red}$7^5$}&{\color{red}$3^9$}\\
{\color{green}2}&12 0&7 0&4 2&8 2\\
{\color{green}4}&-\ -&-\ -&-\ -&-\ -\\
{\color{green}6}&-\ -&-\ -&-\ -&-\ -\\
{\color{green}8}&-\ -&-\ -&-\ -&-\ -\\
{\color{green}10}&-\ -&-\ -&-\ -&-\ -\\
{\color{green}12}&-\ -&-\ -&-\ -&-\ -\\
{\color{green}14}&-\ -&-\ -&-\ -&-\ -\\
{\color{green}16}&-\ -&-\ -&-\ -&-\ -\\
{\color{green}18}&-\ -&-\ -&-\ -&-\ -\\
{\color{green}20}&-\ -&-\ -&-\ -&-\ -\\
\end{tabular}\\
\\

\bibliography{References}

\begin{thebibliography}{BCP97}

\bibitem[BCP97]{MAGMA}
Wieb Bosma, John Cannon, and Catherine Playoust.
\newblock The {M}agma algebra system. {I}. {T}he user language.
\newblock {\em J. Symbolic Comput.}, 24(3-4):235--265, 1997.
\newblock Computational algebra and number theory (London, 1993).

\bibitem[Miy06]{Miyake2006}
Toshitsune Miyake.
\newblock {\em Modular forms}.
\newblock Springer Monographs in Mathematics. Springer-Verlag, Berlin, english
  edition, 2006.
\newblock Translated from the 1976 Japanese original by Yoshitaka Maeda.

\bibitem[Nak79]{Nakagoshi1979}
Norikata Nakagoshi.
\newblock The structure of the multiplicative group of residue classes modulo
  {$\mathfrak{p}^{N+1}$}.
\newblock {\em Nagoya Math. J.}, 73:41--60, 1979.

\bibitem[Neu99]{Neukirch1999}
J{\"u}rgen Neukirch.
\newblock {\em Algebraic number theory}, volume 322 of {\em Grundlehren der
  Mathematischen Wissenschaften [Fundamental Principles of Mathematical
  Sciences]}.
\newblock Springer-Verlag, Berlin, 1999.
\newblock Translated from the 1992 German original and with a note by Norbert
  Schappacher, With a foreword by G. Harder.

\bibitem[Rib77]{Ribet1977}
Kenneth~A. Ribet.
\newblock Galois representations attached to eigenforms with {N}ebentypus.
\newblock In {\em Modular functions of one variable, {V} ({P}roc. {S}econd
  {I}nternat. {C}onf., {U}niv. {B}onn, {B}onn, 1976)}, pages 17--51. Lecture
  Notes in Math., Vol. 601. Springer, Berlin, 1977.

\end{thebibliography}
\bibliographystyle{alpha}

\end{document}